\documentclass[12pt,a4paper]{article}

\usepackage[utf8]{inputenc}
\usepackage[T2A]{fontenc}
\usepackage[english]{babel}
\usepackage{amscd,amssymb,amsthm}
\usepackage{amsfonts,amsmath,array}
\usepackage[margin=2cm]{geometry}
\usepackage{mathtools,mathcomp}
\usepackage{hyperref}

\usepackage{pgf,tikz}
\usetikzlibrary{arrows}

\title{The chromatic number of the plane with an interval \\of forbidden distances is at least 7}
\author{Vsevolod Voronov\footnote{PhD, Researcher, Caucasus Mathematical Center of Adyghe State University, Maikop, Russia; Moscow Institute of Physics and Technology, Dolgoprudniy, Russia}}

\begin{document}

\bigskip

\newtheorem{observ}{Observation}[section]
\newtheorem{lemma}{Lemma}[section]
\newtheorem{prop}{Proposition}[section]
\newtheorem{claim}{Claim}[section]
\newtheorem{defin}{Definition}[section]
\newtheorem{question}{Question}[section]
\newtheorem{theorem}{Theorem}[section]
\newtheorem{remark}{Remark}[section]
\newtheorem{cond}{Condition}[section]
\newtheorem{corr}{Corollary}[section]
\newtheorem{property}{Property}

\maketitle

\abstract{The work is devoted to one of the variations of the Hadwiger--Nelson problem on the chromatic number of the plane. In this formulation  one needs to find for arbitrarily small~$\varepsilon$ the least possible number of colors needed to color a Euclidean plane in such a way that any two points, the distance between which belongs to the interval $[1-\varepsilon, 1+\varepsilon]$, are colored differently. The conjecture proposed by G. Exoo in 2004, states that for arbitrary positive~$\varepsilon$ at least 7 colors are required. Also, with a sufficiently small~$\varepsilon$ the number of colors is exactly 7. The main result of the present paper is that the conjecture is true for the Euclidean plane as well as for any Minkowski plane.
}

\section{Introduction}

This paper considers one of the versions of the classical Hadwiger--Nelson problem on the chromatic number of the plane. The book by A. Soifer \cite{Soifer} is devoted to the history of this problem and its numerous variants. Reviews of recent results for low-dimensional spaces can be found in \cite{Rai1,warsaw}.

Let us consider some set of forbidden distances $\mathcal{D} \subset \mathbb{R}_+$. It is required to find the smallest natural  $k$ for which there exists a partition of the plane into $k$ pairwise non-intersecting subsets, none of which contains a pair of points at the Euclidean distance belonging to $\mathcal{D}$.  In other words, we need to color the plane with $k$ colors such that any two points whose distance between them belongs to $\mathcal{D}$ have a different color, i.e.
\[
\mathbb{R}^2 = M_1 \sqcup M_2 \sqcup \dots \sqcup M_k, \quad \forall i \; \forall x,y \in M_i: \; \|x-y\| \not\in \mathcal{D},
\]
where $\|x-y\|$ is the Euclidean distance between points $x$, $y$, and $M_i$ is the set of points in the plane colored in the $i$-th color.

Let us call the smallest $k$ for which this is possible the \emph{chromatic number of the plane} $\chi_{\mathcal{D}}(\mathbb{R}^2)$ with a forbidden distance set $\mathcal{D}$. In the case $\mathcal{D} = \{1\}$ we have a classical formulation of the Hadwiger--Nelson problem. In 2018, the lower estimate  has been improved~\cite{deGrey,exoo2020chromatic}, and it is now known that

\[
5\leq \chi_{\{1\}}(\mathbb{R}^2) \leq 7.
\]

In recent years, a number of interesting results have been obtained in this area.  In particular, J. Davies \cite{Davies} managed to prove that when $\mathcal{D} = \{1,3,5,7, \dots\}$ it holds that $\chi_{\mathcal{D}}(\mathbb{R}^2) = +\infty$.

The case $\mathcal{D} = [1, b]$ was studied in~\cite{woodall1973distances,townsend2005colouring,currie2015chromatic,warsaw,parts2023}. It has been shown~\cite{townsend2005colouring,currie2015chromatic} that 
\[
 \chi_{[1,b]}(\mathbb{R}^2)\geq 6, \quad \forall b>1.
\]

\begin{figure}
    \centering
    \includegraphics[width=6cm]{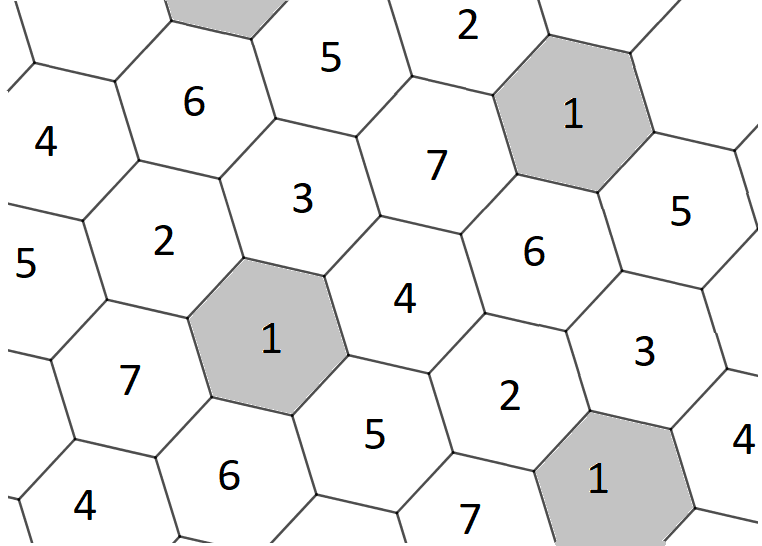}
    \caption{7-coloring of the plane}
    \label{hex7col}
\end{figure}

From the coloring of the tessellation of the plane with regular hexagons (Fig.~\ref{hex7col}), it follows that
\[
 \chi_{[1,b]}(\mathbb{R}^2)\leq 7, \; b \leq \frac{\sqrt{7}}{2}.
\]

Note that with computer calculations, it is possible to find the exact value of the chromatic number  for some ranges, for instance~\cite{parts2023},
\[
 \chi_{[1,b]}(\mathbb{R}^2)=7, \quad 1.085...\leq b \leq \frac{\sqrt{7}}{2} = 1.322...
\]

The hypothesis formulated by G. Exoo in \cite{exoo1}, states that this equality is also true for $b$ arbitrarily close to $1$.  The proof of this hypothesis  is the main result of this paper. Moreover, it is straightforward to generalize  this result to the Minkowski plane, i.e., the metric given by an arbitrary 2-dimensional norm.

Let $\|\cdot \|_U$ be the 2-dimensional norm for which $U = \{x: \; \|x\|_U\leq 1\}$. Clearly, the shape $U$ is convex for any norm. If $U$ is strictly convex, then we will call the corresponding norm strictly convex. 

In the paper by K. Chilakamarri \cite{chilakamarri1991unit} it was shown that in the case of a single forbidden distance and an arbitrary norm  
\[
    4 \leq \chi_{\{1\}}(\mathbb{R}^2; U)\leq 7.
\]

Note that if we are interested in what happens when the ratio of interval bounds tends to~1, it does not matter whether we consider a closed or an open interval. Moreover, when $\varepsilon \to 0$, the cases $\mathcal{D}=[1-\varepsilon,1]$, $\mathcal{D}'=[1-\varepsilon,1+\varepsilon]$, $\mathcal{D}''=[1,1+\varepsilon]$ are equivalent.

\begin{theorem}
\label{thm_main}
For any 2-dimensional norm $\|\cdot\|_U$ and $\varepsilon>0$ it holds that
\[
 \chi_{[1-\varepsilon,1+\varepsilon]}(\mathbb{R}^2; U) \geq 7. 
\]     
\end{theorem}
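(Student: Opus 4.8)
The plan is to argue by contradiction. Fix a norm $\|\cdot\|_F$ and an $\varepsilon>0$, and suppose the plane admits a proper $6$-coloring $c:\mathbb{R}^2\to\{1,\dots,6\}$, meaning $c(x)\neq c(y)$ whenever $\|x-y\|_F\in[1-\varepsilon,1+\varepsilon]$; since the bound $\chi_{[1,b]}(\mathbb{R}^2)\ge 6$ is already available, ruling out such a $6$-coloring is exactly what remains. The whole strategy hinges on the feature that separates the interval problem from the single-distance one, namely \emph{robustness}: the forbidden set is a full annulus, so for every color class $M_i$ and every $p\in M_i$ the closed annulus $\{x:\ 1-\varepsilon\le\|x-p\|_F\le 1+\varepsilon\}$ is entirely free of color $i$. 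This rigidity is what ought to make $6$ colors impossible, even though density- and averaging-type bounds, which ignore it, cannot exceed roughly $5$.

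First I would isolate the elementary consequences of robustness that will drive the forcing. Two points of the same color on a common segment must lie at distance $<1-\varepsilon$, and, dually, any circle $\{x:\ \|x-p\|_F=1\}$ is colored using only the five colors other than $c(p)$, the induced coloring of that circle again respecting the interval constraint. More importantly, three points in near-equilateral position, with all three pairwise $F$-distances inside $[1-\varepsilon,1+\varepsilon]$, must receive three distinct colors. These forced triangles are the atoms of the argument.

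The core is then a two-dimensional surrounding obstruction, carried out on an explicit finite configuration rather than on one circle (a purely one-dimensional analysis of a single unit circle is too weak by itself, as such a circular interval-graph is colorable with far fewer than the available colors). Normalizing $c(O)=1$ at a chosen center $O$, I would place auxiliary centers at mutual $F$-distance $1$ forming a hexagonal rosette around $O$, glue forced triangles along it, and propagate the resulting color constraints once around $O$. The objective is a continuum-valid analogue of the reason the hexagonal pattern of Figure~\ref{hex7col} needs seven colors: one cannot encircle a region by regions of only five further colors while keeping every same-colored pair out of the forbidden annulus, so the sixth color is forced to reappear at an interval distance from itself, contradicting properness. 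The existence in principle of a finite non-$6$-colorable witness is not the issue; the work is to exhibit one whose distances are provably pinned into $[1-\varepsilon,1+\varepsilon]$ by construction, so that a finite, norm-free case-check closes the argument.

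I expect the main obstacle to be precisely the uniformity demanded by the statement, over all norms and all $\varepsilon>0$ at once. For a general symmetric convex body $F$ there is no distinguished $60^\circ$ angle at which chords have unit length, so the rosette, the betweenness relations, and every forced-triangle step must be expressed through affine and convexity data shared by all norms; moreover the non-strictly-convex case, where $\partial F$ contains a segment, must be handled on its own, since there forced triangles can degenerate and the propagation can stall. Since $c$ is an arbitrary coloring, not assumed measurable, continuous, or piecewise-regular, the entire forcing has to be run on the finite point set directly, with no appeal to tiles or boundaries. Decoupling this combinatorial skeleton from the metric, so that a single scheme works simultaneously for every $F$ and every $\varepsilon$, is the step I anticipate requiring the most care.
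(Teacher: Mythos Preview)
What you have written is a plan, not a proof: no configuration is actually exhibited, no case analysis is carried out, and the crucial step---``glue forced triangles along a hexagonal rosette and propagate once around $O$''---is left entirely unspecified. A single-circle argument is, as you note, too weak, but you give no indication of what the two-dimensional obstruction is or why six colors would fail to thread it. As stated the proposal contains no verifiable mathematical content beyond the elementary observations in your second paragraph.

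More seriously, the strategic decision you commit to is the wrong one. You insist that because $c$ is an arbitrary coloring ``the entire forcing has to be run on the finite point set directly, with no appeal to tiles or boundaries.'' The paper does precisely the opposite, and the interval is exactly what makes this legitimate: by Proposition~\ref{prop_discrete}, any proper $6$-coloring for $[1-\varepsilon,1+\varepsilon]$ can be replaced by a hexagonal \emph{map} coloring for the slightly shorter interval $[1-(\varepsilon-2h),1+(\varepsilon-2h)]$, so one may freely assume simply connected monochromatic regions, polygonal boundaries, and isolated trichromatic points. With that reduction in hand, the argument is topological rather than finite-graph-theoretic. One locates a trichromatic point $x$, so that the unit circle $T_1(x)$ is properly $3$-colored; a boundary-tracing argument inside the disk (Lemma~\ref{final}) then produces two trichromatic points $u,v$ with the \emph{same} multicolor and $1<\|u-v\|<2$. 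The decisive lemma is Theorem~\ref{bicycle0}: the union $T_1(u)\cup T_1(v)$ of two overlapping unit circles cannot be properly $3$-colored for any positive interval, proved via an integer ``index'' invariant on complementary arcs (Proposition~\ref{prop_ind}). Your worry about non-strictly-convex norms is disposed of up front by Lemma~\ref{lemma_strictly_convex}, approximating by a strictly convex norm at the cost of shrinking $\varepsilon$. None of these ingredients---the discretization, the trichromatic-point machinery, the arc index, or the bicycle obstruction---appears in your outline.
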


It is sufficient to show that the lower estimate holds for arbitrarily small positive $\varepsilon$,  or, in other words, that there is no coloring of the plane with 6 colors under these conditions. 

The following statement, which is used in the proof of the main result, may be of particular interest.

\begin{theorem}
\label{thm_bicycle}
    Denote the unit circle centered at $x$ by $T_1(x)=\{y \;:\;\|x-y\|_U=1\}$.   Then for $\varepsilon>0$ and $1<\|u-v\|_U<2$ it holds that
    \[
        \chi_{[1-\varepsilon,1+\varepsilon]}(T_1(u) \cup T_1(v); U) \geq 4.
    \]
\end{theorem}

In the case of a strictly convex norm, we can specify some segment on which the chromatic number is equal to 7. In particular, for the Euclidean norm, at least the following can be asserted.

\begin{corr}
Let $0<\varepsilon \leq   \frac{\sqrt{7}-2}{\sqrt{7}+2} = 0.138...$. Then
\[
 \chi_{[1-\varepsilon,1+\varepsilon]}(\mathbb{R}^2) = 7. 
\]
\end{corr}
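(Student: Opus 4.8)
The plan is to establish the two inequalities $\chi_{[1-\varepsilon,1+\varepsilon]}(\mathbb{R}^2) \geq 7$ and $\chi_{[1-\varepsilon,1+\varepsilon]}(\mathbb{R}^2) \leq 7$ separately. The lower bound requires no new argument: it is precisely the statement of the main theorem (Theorem~1) specialized to the Euclidean norm, corresponding to $F = \{x : \|x\|_F \leq 1\}$ being the unit disk, and it holds for \emph{every} $\varepsilon > 0$. So the entire content of the corollary is the upper bound together with the identification of the exact range of $\varepsilon$ for which it is available.

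For the upper bound I would start from the hexagonal coloring of Figure~\ref{hex7col}, which gives $\chi_{[1,b]}(\mathbb{R}^2) \leq 7$ for all $b \leq \tfrac{\sqrt{7}}{2}$. The key observation is that this coloring is scale-covariant: applying a homothety of ratio $\lambda > 0$ to the plane converts a $7$-coloring that forbids distances in $[1,b]$ into one that forbids distances in $[\lambda, \lambda b]$. Hence a valid $7$-coloring avoiding the interval $[1-\varepsilon, 1+\varepsilon]$ exists as soon as one can choose $\lambda$ so that $[1-\varepsilon, 1+\varepsilon] \subseteq [\lambda,\, \lambda\tfrac{\sqrt{7}}{2}]$, that is,
\[
\lambda \leq 1-\varepsilon \quad\text{and}\quad \lambda \cdot \frac{\sqrt{7}}{2} \geq 1+\varepsilon .
\]

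Such a $\lambda$ exists precisely when $\tfrac{2(1+\varepsilon)}{\sqrt{7}} \leq 1-\varepsilon$, which rearranges to $\varepsilon(\sqrt{7}+2) \leq \sqrt{7}-2$, i.e. $\varepsilon \leq \tfrac{\sqrt{7}-2}{\sqrt{7}+2}$. This is exactly the hypothesis of the corollary, so throughout the stated range the rescaled hexagonal coloring yields $\chi_{[1-\varepsilon,1+\varepsilon]}(\mathbb{R}^2) \leq 7$; combined with the lower bound from Theorem~1 this gives equality. I do not expect a genuine obstacle here, since both bounds are already in hand; the only point to get right is the elementary arithmetic of the homothety, and in particular the remark that the threshold $\tfrac{\sqrt{7}-2}{\sqrt{7}+2}$ is exactly the largest $\varepsilon$ for which the rescaled band $[\lambda,\, \lambda\tfrac{\sqrt{7}}{2}]$ can still be made to contain $[1-\varepsilon, 1+\varepsilon]$.
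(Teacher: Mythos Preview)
Your argument is correct and is exactly the route the paper has in mind: the lower bound is Theorem~1 specialised to the Euclidean norm, and the upper bound is the hexagonal $7$-colouring of Figure~\ref{hex7col} rescaled so that $[1-\varepsilon,1+\varepsilon]\subseteq[\lambda,\lambda\tfrac{\sqrt7}{2}]$, which is possible precisely when $\varepsilon\le\tfrac{\sqrt7-2}{\sqrt7+2}$. The paper states the corollary without writing out this computation, so your derivation of the threshold just makes explicit what is left implicit there.
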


In the reasoning that we will use further, it is sufficient to consider a bounded region of the plane. Therefore, let us formulate in terms of the distance realization  \cite{hadwiger2015combinatorial,woodall1973distances} the result which can be obtained by the same tools without additional efforts. We say that the set $M\subset\mathbb{R}^2$ realizes a distance $d$ if there are points $x,y \in M$ for which $\|x-y\|_U=d$.
 \begin{theorem}
 \label{thm_cover}
      Suppose that the disc of radius 3 defined by an arbitrary 2-dimensional norm is covered by 6 closed sets. Then at least one of these sets realizes the unit distance.
 \end{theorem}

In the considered formulation of the problem we can assume, as will be explained later, that the plane is divided into some regions by Jordan curves or even polylines, and the interior of each region is monochromatic. In other words, we consider a coloring of a map on the plane. It seems that the problem of the chromatic number of a map in the plane with a single forbidden distance \cite{townsend2005colouring,woodall1973distances} is harder than the problem with a forbidden interval. Given a single forbidden distance, such a coloring in $k$ colors may exist even if there is no coloring in $k$ colors with a short forbidden interval.

Another well-known example of conditions imposed on the coloring of the plane with a single forbidden distance is the requirement that the sets of points painted in each color in some bounded region are measurable. The problem on measurable chromatic number seems to be substantially more difficult than the one under consideration. 

Also note that C.~Thomassen proved~\cite{thomassen1999nelson} that  $7$ colors are required  for a map-type coloring of the two-dimensional surface with some metric, if the diameter of the surface is large enough,  all the regions are simply connected, the diameter of each region is less than 1, and any two regions of the same color are at a distance greater than 1 (the definition of a ``nice'' coloring in the mentioned article). The latter assumption is not used in the present paper. 

If we denote, respectively, $\chi_{mes}(\mathbb{R}^2)$, $\chi_{map}(\mathbb{R}^2)$, $\chi_{[1-\varepsilon,1+\varepsilon]}(\mathbb{R}^2)$, $\chi_{nice}(\mathbb{R}^2)$ the measurable chromatic number, the chromatic number for the coloring of the map type, the chromatic number for a short forbidden interval, then, taking into account the results of the present paper, the following estimates are currently known:
\[
      5 \leq \chi_{\{1\}}(\mathbb{R}^2) \leq \chi_{mes}(\mathbb{R}^2) \leq  \chi_{map}(\mathbb{R}^2) \leq 7 \leq  \chi_{[1-\varepsilon,1+\varepsilon]}(\mathbb{R}^2), 
\]      
\[
   \quad 6 \leq \chi_{map}(\mathbb{R}^2)\leq \chi_{nice}(\mathbb{R}^2)=7. 
\]

\section{Preliminaries}

Recall that a norm in $\|\cdot\|$  on the vector space $\mathbb{R}^n$ is a real-valued function satisfying the following axioms:

\begin{enumerate}
    \item $\|x\| \geq 0$ for all $x \in \mathbb{R}^n$, and $\|x\|=0$ iff $x=0$.
    \item $\|\alpha x\| = |\alpha| \cdot \|x\|$ for all $x \in \mathbb{R}^n, \;  \alpha \in \mathbb{R}$.
    \item For all $x, y \in \mathbb{R}^n$ the triangle inequality holds: $\|x+y\| \leq \|x\|+ \|y\|$.
\end{enumerate}

Any norm is uniquely determined by the unit ball $U = \{x: \; \|x\|\leq 1\}$. The unit ball is a closed, bounded, symmetric, convex set with non-empty interior. The norm $\|\cdot\|_U$ is called \emph{strictly convex}, if the equality $\|x+y\|= \|x\|+\|y\|$ implies that $x$ and $y$ are linearly dependent.  Equivalently, strict convexity holds if the boundary of $U$ does not contain a line segment.

In the following, unless otherwise specified, we assume that $\|\cdot\|$ is a strictly convex 2-dimensional norm. For brewity, sometimes we use this notation  instead of $\|\cdot\|_U$. Throughout the article, a circle refers to a \emph{circle defined by the given norm}, i.e. the unit circle centered at the point $x\in \mathbb{R}^2$~is
\[
    T_1(x)= \{y\in \mathbb{R}^2  \;:\; \|y-x\|_U = 1 \}.
\]

Similarly, the disc of radius $r>0$ centered at $x$ is
\[
    U_r(x)= \{y\in \mathbb{R}^2  \;:\; \|y-x\|_U \leq r \}.
\]

Next we need some properties of a strictly convex 2-dimensional norm. 

\begin{property}[\cite{martini2001geometry}, Proposition 14, p. 106.]
    If $x \neq y$, $\|x-y\|<2$ then  two unit circles $T_1(x)$, $T_1(y)$ intersect at exactly two points.
    \label{property1}
\end{property}

\begin{property}[\cite{martini2001geometry}, Proposition 31 (Monotonicity Lemma), p. 115.]
    Let $u \neq 0$, $v,w \in T_1(x)$, $u \neq w$ and the ray $xv$ lies in the angle $uxv$. Then $\|u-v\| \leq \|u-w\|$, and the equality is possible iff $v=w$.
    \label{property2}
\end{property}

In most cases, we will use the following

\begin{corr}[\cite{martini2001geometry}, p. 114.]
    For a fixed point $x \in T_1(0)$ and a variable point $y \in T_1(0)$, the distance $\|x-y\|$ continuously increases as $y$ ranges from $x$ to the antipode $-x$.
    \label{property2_cor1}
\end{corr}

\begin{corr}
\label{cor_arc_inclusion}
    If two arcs $pq$, $p'q'$ are less than half a circle, and $p'q'$is a proper subset of $pq$, then $\|p'-q'\|<\|p-q\|$. 
\end{corr}

To show this, we can move $p$ to $p'$, then $q$ to $q'$ along the circle, and the distance between the ends of the arc will continuously decrease according to Property \ref{property2}. One could also prove this inequality using Proposition 32 in \cite{martini2001geometry} (p. 116). 

\begin{property}[\cite{martini2001geometry}, Propositions 33 and 34, p. 116]
    For any point $x\in T_1(0)$ the equilateral hexagon having a vertex $x$ can be inscribed in $T_1(0)$.
    \label{property3}
\end{property}



\begin{observ}
    \label{observ_chord}
    Let $x_0 \in T_1(0)$, $x(t)=t x_0$, $0<t<2$. Denote the intersection points of $T_1(0)$ and $T_1(x(t))$ by $u_1(t), u_2(t)$. Then the length of the common chord $f(t)=\||u_1(t)-u_2(t)\|$ is monotonically decreasing when $t$ ranges from $0$ to $2$. Furthermore, if $t<1$, then $f(t)>1$.
\end{observ}

\begin{proof}
Indeed, $f(t) \to 0$ at $t \to 2$, and $f(t)$ is positive by Property \ref{property2} at $0<t<2$. Furthermore, $u_1(t)$  cannot take the same value at two different points $t=t_1$, $t =t_2$, otherwise we have three collinear points $0$, $x(t_1)$, $x(t_2)$ on the circle $T_1(u_1(t_1))$, which is impossible. The same is true for $u_2(t)$. Thus, one of the points $u_1, u_2$ moves clockwise on the circle, the other counterclockwise, when $t$ increases.  

In the above construction, $u_1(t_1)u_2(t_1) \subset u_1(t_2)u_2(t_2)$ at $t_1>t_2$, which means that $f(t)$ is monotonically decreasing (by Corollary \ref{cor_arc_inclusion}).

Finally, note that if $t=1$, then $u_1(1)$ and $u_2(1)$ are two vertices of the hexagon from Property \ref{property3} taken through one. This arc contains an arc whose distance between the ends is 1 (namely, side of the hexagon). Thus, $f(1)>1$ and $t<1$ implies $f(t)>1$.
\end{proof}

Note that for the reasoning used in the proof of the main result, all the strictly convex norms are the same. To consider the case of a non-strictly convex norm, it is enough to approximate it by a strictly convex norm and reduce the length of the forbidden interval. The following statement has been proved in a number of sources in a stronger form than is required, and we will not focus on it.

\begin{lemma}[follows from \cite{ahmadi2019polynomial}, Theorem 3.1]
    Let $\|\cdot\|_H$ be an arbitrary norm. Then for arbitrary $\delta>0$ there is a strictly convex norm $\|\cdot\|_U$ such that
    \[
         (1-\delta) \|x\|_U \leq \|x\|_H \leq \|x\|_U, \; \forall x \in \mathbb{R}^2 .
    \]
    \label{lemma_strictly_convex}
\end{lemma}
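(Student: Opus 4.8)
The plan is to obtain $\|\cdot\|_F$ from $\|\cdot\|_H$ by an additive perturbation rather than invoking the full strength of the cited polynomial approximation. Concretely, I would set
\[
\|x\|_F := \|x\|_H + t\,\|x\|_2,
\]
where $\|\cdot\|_2$ is the Euclidean norm and $t>0$ is a small parameter to be fixed at the end. As a sum of two norms this is again a norm, and its unit ball is centrally symmetric. The first thing to establish is that it is strictly convex, which I would do through the equality case of the triangle inequality: if $x,y\neq 0$ and $\|x+y\|_F = \|x\|_F + \|y\|_F$, then, since $\|\cdot\|_H$ and $\|\cdot\|_2$ each obey the triangle inequality and their sum attains equality, both must hold with equality simultaneously. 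In particular $\|x+y\|_2 = \|x\|_2 + \|y\|_2$, and strict convexity of the Euclidean norm forces $x$ and $y$ to be positively proportional. Hence no segment lies on the unit sphere of $\|\cdot\|_F$, i.e.\ it is strictly convex.

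It then remains to verify the two inequalities and to choose $t$. The upper bound $\|x\|_H \le \|x\|_F$ is immediate, since $t\|x\|_2 \ge 0$. For the lower bound I would rewrite $(1-\delta)\|x\|_F \le \|x\|_H$ as $(1-\delta)\,t\,\|x\|_2 \le \delta\,\|x\|_H$, that is,
\[
t\,\|x\|_2 \le \frac{\delta}{1-\delta}\,\|x\|_H .
\]
Because all norms on $\mathbb{R}^2$ are equivalent --- the continuous function $\|\cdot\|_H$ attains a positive minimum $c_0$ on the Euclidean unit circle by compactness --- we have $\|x\|_H \ge c_0\,\|x\|_2$ for every $x$. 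Thus it suffices to take any $t$ with $0 < t \le \frac{\delta c_0}{1-\delta}$; with this choice both inequalities hold simultaneously for all $x$, which completes the construction.

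The argument is essentially elementary, so there is no serious obstacle; the only point requiring care is the strict convexity of the sum, which is precisely why a strictly convex summand (here Euclidean) is used rather than an arbitrary one. I would remark that this self-contained perturbation already yields the statement, while \cite{ahmadi2019polynomial} delivers the stronger conclusion that the approximating unit ball can even be taken semialgebraic with a smooth, strictly convex boundary; only the weaker strictly convex form above is needed in the sequel.
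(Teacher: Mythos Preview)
Your argument is correct. The paper does not actually supply its own proof of this lemma; it merely attributes the statement to Theorem~3.1 of \cite{ahmadi2019polynomial} and remarks that it ``has been proved in a number of sources in a stronger form than is required.'' Your route is therefore genuinely different: instead of invoking the cited polynomial approximation theorem, you give a self-contained elementary construction by perturbing $\|\cdot\|_H$ additively with a small Euclidean term and using strict convexity of $\|\cdot\|_2$ to force strict convexity of the sum. What the cited result buys is more --- an approximating unit ball that is semialgebraic with smooth strictly convex boundary --- but, as you already note, only strict convexity is used anywhere in the paper, so your shorter perturbation argument is entirely adequate here. One small implicit assumption worth making explicit is $0<\delta<1$, since for $\delta\ge 1$ the lower inequality is vacuous.
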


Next we proceed to consider the colorings of the plane and and introduce conditions imposed on the colorings by using a strictly convex norm $\|\cdot\|$.

\begin{defin}
Let us call a coloring of the plane $\varphi: \mathbb{R}^2 \to \{1, 2, \dots, k\}$ for some forbidden interval $\mathcal{D}=[1-\varepsilon,1+\varepsilon]$ \textbf{proper}, if there is no pair of points $u,v \in \mathbb{R}^2$ such that $\|u-v\| \in [1-\varepsilon,1+\varepsilon]$ and $\varphi(u)=\varphi(v)$.      
\end{defin}

If this condition is satisfied for some set $A \subseteq \mathbb{R}^2$, we will also say that the set A is properly $k$-colored. If only colors from $\{c_1, c_2, \dots, c_m\}\subset \{1,2, \dots, k\}$ occur in $A$, then we say that $A$ is properly colored in $c_1, c_2, \dots, c_m$.

\begin{prop}
Suppose that for $\mathcal{D}=[1-\varepsilon,1+\varepsilon]$ there exists some proper $k$-coloring of the plane. Let the plane, in addition, be divided into tiles of arbitrary shape, different tiles can be of different shapes,  the diameter of each tile does not exceed $h$, and $2h<\varepsilon$. Then for the interval of positive length $\mathcal{D}_h=[1-(\varepsilon-2h),1+(\varepsilon-2h)]$ there exists a proper $k$-coloring of the plane, in which the interior of each tile is monochromatic, and the border points are colored in some color of the adjacent tiles. 
    \label{prop_discrete}
\end{prop}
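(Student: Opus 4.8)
The plan is to recolor the plane tile-by-tile using the existing proper coloring $\varphi$ and to verify properness for the shortened interval by a triangle-inequality estimate. First I would, in each tile $T$, fix a single representative point $p_T \in T$ and define a new coloring $\psi$ by setting $\psi(x) = \varphi(p_T)$ for every $x$ in the interior of $T$; for a point $x$ lying on the common border of several tiles I would set $\psi(x) = \varphi(p_T)$ for one chosen adjacent tile $T$ (equivalently, the color of one of its neighbours). By construction the interior of every tile is monochromatic and every border point carries a color of an adjacent tile, so the structural requirements of the statement hold automatically, while $\mathcal{D}_h \neq \emptyset$ follows at once from $2h < \varepsilon$.

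The substance is to check that $\psi$ is proper for $\mathcal{D}_h = [1-(\varepsilon-2h),\,1+(\varepsilon-2h)]$. Suppose, for contradiction, that there are points $u,v$ with $\psi(u) = \psi(v)$ and $\|u-v\| \in \mathcal{D}_h$. By the way $\psi$ was defined there are tiles $T$, $T'$ with $u \in \overline{T}$, $v \in \overline{T'}$ and $\psi(u) = \varphi(p_T)$, $\psi(v) = \varphi(p_{T'})$; in particular $\varphi(p_T) = \varphi(p_{T'})$. Since $\mathrm{diam}\,\overline{T} = \mathrm{diam}\,T \le h$ (and likewise for $T'$), both $\|u - p_T\| \le h$ and $\|v - p_{T'}\| \le h$ hold.

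Now I would bound $\|p_T - p_{T'}\|$ from both sides via the triangle inequality:
\[
\|u-v\| - 2h \;\le\; \|p_T - p_{T'}\| \;\le\; \|u-v\| + 2h .
\]
Substituting the endpoints of $\mathcal{D}_h$ cancels the $2h$ terms exactly and yields $\|p_T - p_{T'}\| \in [1-\varepsilon,\,1+\varepsilon] = \mathcal{D}$. Moreover $p_T \ne p_{T'}$: because $\|u-v\| \ge 1-\varepsilon+2h > h$ (using $\varepsilon<1$, so that $1-\varepsilon+h>0$), the points $u$ and $v$ cannot lie in a common tile, whence $T \ne T'$ and their representatives, taken from disjoint interiors, are distinct. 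Thus $p_T$ and $p_{T'}$ are two distinct points at a $\mathcal{D}$-distance sharing the color $\varphi(p_T) = \varphi(p_{T'})$, contradicting the properness of $\varphi$. Hence no such $u,v$ exist and $\psi$ is a proper $k$-coloring for $\mathcal{D}_h$.

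I expect the only delicate point to be the bookkeeping for border points — making sure that every point, interior or boundary, is traced back to a representative lying in the closure of a tile of the same $\psi$-color at distance at most $h$ — together with confirming that the two representatives are genuinely distinct; both reduce to the single observation that a $\mathcal{D}_h$-distance is too large for $u,v$ to share one tile. The triangle-inequality estimate itself is routine, and the exact cancellation of the $2h$ correction is precisely what the choice $\mathcal{D}_h = [1-(\varepsilon-2h),\,1+(\varepsilon-2h)]$ is engineered to produce.
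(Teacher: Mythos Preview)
Your proof is correct and follows the same idea as the paper's: recolor each tile by a color already present in it (you implement this by choosing a representative point $p_T$), then use the triangle inequality to show that a forbidden $\mathcal{D}_h$-pair in the new coloring would produce a forbidden $\mathcal{D}$-pair in the original one. The paper states only the first sentence of this argument and leaves the triangle-inequality verification implicit; your write-up simply makes the estimate and the distinctness check explicit.
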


\begin{proof}In fact, it is sufficient to color the interior of each tile in any of the colors that occur in it. If the original coloring did not have a pair of same colored points at a distance from $[1-\varepsilon,1+\varepsilon]$, then the new coloring will not have a pair of same colored points at a distance belonging to $\mathcal{D}_h$.
\end{proof}

\begin{defin}
    We will call a (hexagonal) \textbf{discrete coloring} with parameter $h>0$ the coloring of the plane defined by regular hexagons of diameter $h$, in which the interior of each hexagon is monochromatic, and the common points are arbitrarily colored in the color of any of the hexagons to which they belong. 
\end{defin}

Note that if the statement of Theorem~\ref{thm_main} is proved for discrete colorings of the mentioned kind for arbitrarily small $h$, then it will also be proved in the general case. 



\begin{observ}
Consider a proper discrete coloring of the plane. If some connected monochromatic region $Q$ of color $c^*$ is not 1-connected (i.e. it contains inclusions of a different color, and it is not homeomorphic to a disk), it is possible to recolor everything contained inside the outer boundary of $Q$ in color $c^*$. After that the new coloring remains proper, and the region $Q' \supset Q$ of color $c^*$ will be 1-connected. 

\end{observ}

\begin{proof}Suppose that the coloring is no longer proper after recoloring. This means that there are two points $x, y$ of the same color at the distance $d\in[1-\varepsilon,1+\varepsilon]$. If $x, y \in Q'$, then the diameter $\operatorname{diam} Q=\operatorname{diam} Q'>d$, which is impossible. If one of the points does not belong to $Q'$, then other point lies in $Q'$ (for certainty, let $x \not\in Q'$, $y \in Q'$). The circle of radius $d$ centered at $x$ intersects the boundary of $Q'$, which coincides with the boundary of $Q$. Thus, there is a point $z \in Q$, $\|z-x\|=d$, $c(z)=c(x)$. Hence the original coloring is not proper, a contradiction.
\end{proof}

By repeating these steps, we can make all regions 1-connected in any bounded subset of the plane. Consequently, we can assume that this is also true for the whole plane.

\begin{observ}
    If there exists a proper discrete coloring, then there exists also such a proper discrete coloring that all maximal connected monochromatic regions are 1-connected.
    \label{obs_1connected}
\end{observ}

\begin{defin}
A point $x$ has \textbf{multicolor} $C(x)\subseteq\{1,2, \dots, k\}$ in some coloring of the plane $\varphi$ if there are points of each color belonging to $C(x)$ in an arbitrarily small neighborhood of $x$, and $C(x)$ is the maximal set by inclusion for which this condition is satisfied.    
\end{defin}

\begin{defin}
    The number $|C(x)|$, i.e. the number of colors that occur in an arbitrarily small neighborhood of $x$, will be called the \textbf{chromaticity} of point $x$. Points of chromaticity 3 will also be called trichromatic, and points of chromaticity 2 will be called bichromatic.
\end{defin}

Because of the mentioned above, we can assume that in any coloring of interest, the set of trichromatic points is discrete, and the set of bichromatic points is a union of segments. 

The following lemma has been proved in various ways in \cite{brown1994colorings,currie2015chromatic,warsaw}. A short proof that does not depend on the choice of norm can be derived from Proposition \ref{prop_discrete} and Observation~\ref{obs_1connected}. 

\begin{lemma}
\label{lemma_trichromatic_pt}
A proper coloring of the plane in an arbitrary number of colors with a forbidden interval of positive length contains a point of chromaticity of at least 3. We can find such a point in a circle of unit radius centered at a given point $x$.
\end{lemma}

\begin{proof}
Indeed, if we proceed to the discrete version of coloring with 1-connected sets and a parameter $h>0$, then any point $x$ is contained in a monochromatic set bordered by sets of two other colors. Observe that the boundary of a monochromatic region lies entirely inside $T_1(x)$. Hence, in the discrete coloring the unit disc centered at $x$ contains a trichromatic point. Considering the sequence of discrete colorings as $h \to 0$, we see that this circle contains some limit point, which will be (at least) a trichromatic point in the original coloring.
\end{proof}    

A similar statement was proved for the arbitrary dimension in the case of Euclidean norm~\cite{slice3}.

\begin{lemma}
A proper coloring of $\mathbb{R}^n$ with $\mathcal{D}=\{1\}$ contains a point of chromaticity at least $n+1$, and there is at least one such point in every regular $(n+1)$-simplex with the side length $a = \sqrt{2n(n+1)}$.
\end{lemma}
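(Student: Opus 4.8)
The plan is to argue by induction on $n$, slicing $\mathbb{R}^n$ by hyperplanes parallel to a facet of the simplex and invoking the lower-dimensional case, the single extra color being supplied by the transverse direction. Throughout I would work with the stratification of a coloring by chromaticity: writing $C_{\geq k}$ for the set of points of chromaticity at least $k$, note that $C_{\geq k}$ is closed (if $x_j\to x$ and every neighborhood of each $x_j$ contains $k$ colors, then so does every neighborhood of $x$), and that after the standard reduction to a locally finite coloring the set $C_{\geq k}$ is, generically, of dimension $n-k+1$; in particular $C_{\geq n+1}$ is a discrete set of points. The relevant metric facts about the regular simplex $S$ on $n+1$ vertices with edge $a=\sqrt{2n(n+1)}$ are that its circumradius equals $n$ and its inradius equals $1$, while each facet is a regular simplex on $n$ vertices of circumradius $\sqrt{n^2-1}$; these are the numbers that make the induction close.

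For the base case $n=1$ the claim is that every proper coloring of $\mathbb{R}$ with $\mathcal{D}=\{1\}$ has a point of chromaticity $\geq 2$ in every segment of length $a=2$. If a closed segment of length $2$ contained no such point, then each color class would be relatively open in it, so by connectedness the segment would be monochromatic; but a monochromatic segment of length $2$ contains two points at distance $1$, contradicting properness.

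For the inductive step, fix $S$ with vertices $v_0,\dots,v_n$ and circumcenter $O$, and consider the hyperplanes parallel to the facet $F_0$ opposite $v_0$. The cross-sections of $S$ by these hyperplanes are regular simplices on $n$ vertices whose circumradius increases continuously from $0$ (at $v_0$) to $\sqrt{n^2-1}$ (at $F_0$); since $n-1<\sqrt{n^2-1}$ for $n\geq 2$, exactly one such hyperplane $\Pi$ cuts $S$ in a regular simplex $S'$ of circumradius precisely $n-1$, and $S'$ lies strictly inside $S$. Restricting the coloring to $\Pi\cong\mathbb{R}^{n-1}$ (the forbidden distance is unchanged) and applying the induction hypothesis to $S'$ produces a point $p\in S'\subset S$ of chromaticity $\geq n$ in $\Pi$, hence of chromaticity $\geq n$ in $\mathbb{R}^n$. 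Thus $C_{\geq n}\cap\operatorname{int}S\neq\emptyset$.

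The hard part is the final ``boost'' from chromaticity $n$ to chromaticity $n+1$ inside $S$, and this is where the single forbidden distance (as opposed to an interval) must genuinely enter. Suppose $p$ has chromaticity exactly $n$, with local colors $L=\{c_1,\dots,c_n\}$; I would follow the closed set $C_{\geq n}$ away from $p$, which generically is a curve, and try to show it must terminate at a point of $C_{\geq n+1}$ inside $S$ rather than merely crossing $S$. Pure topology of the chromaticity locus does not force this, so the argument has to use the geometry: the insphere of $S$ has radius exactly $1$ and is tangent to each facet at its centroid, which produces many pairs of points at distance exactly $1$ internal to $S$, and one must show that the $n$ color classes meeting along $C_{\geq n}$ cannot avoid distance $1$ throughout a neighborhood of the size dictated by this insphere without forcing an $(n+1)$-st color to accumulate at some point of the locus. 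Making this quantitative — relating the forbidden unit distance to the inradius-$1$ scale so that the extra color is unavoidable — is the main obstacle; once it is established, the produced point lies in $\operatorname{int}S$, which yields simultaneously the global existence statement and its localization in every regular simplex of edge $\sqrt{2n(n+1)}$.
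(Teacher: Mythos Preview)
The paper does not actually prove this lemma: it is quoted from the external reference \cite{slice3} and then used. So there is no in-paper argument to compare your proposal against.

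Your proposal has a genuine gap, and you name it yourself: the ``boost'' from chromaticity $n$ to chromaticity $n+1$. Everything up through the slicing step is sound --- the circumradius/inradius computations for the simplex with edge $\sqrt{2n(n+1)}$ are correct, the base case $n=1$ is fine, and the cross-section $S'$ of circumradius $n-1$ really is a regular $n$-point simplex of edge $\sqrt{2(n-1)n}$, so induction yields a point of chromaticity $\geq n$ inside $S$. But your final paragraph is a description of what remains to be shown, not an argument. ``One must show that the $n$ color classes meeting along $C_{\geq n}$ cannot avoid distance $1$ \ldots\ without forcing an $(n{+}1)$-st color'' is just a restatement of the goal. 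Following $C_{\geq n}$ as a curve gives no mechanism that prevents it from simply exiting $S$; you offer no concrete use of the unit distance that would obstruct this, and the vague appeal to the inradius being $1$ is not connected to any actual forbidden pair.

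There is also an earlier problem: the phrase ``after the standard reduction to a locally finite coloring''. For a forbidden \emph{interval} such a reduction exists (this is exactly Proposition~\ref{prop_discrete} in the paper), but for the single distance $\mathcal{D}=\{1\}$ there is no analogous discretization --- a proper coloring with one forbidden distance can be arbitrarily wild, and the sets $C_{\geq k}$ need not have the clean codimension you assert. So the curve-following picture you rely on is not justified in this setting, and the argument in \cite{slice3} must proceed differently.
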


In further reasoning, the mutual arrangement of the trichromatic points will play an important role. 

\begin{prop}
    Let $\varphi$ be a proper coloring of the plane in 6 colors with a forbidden interval $[1-\varepsilon,1+\varepsilon]$. Then any two trichromatic points $u, v$ for which $C(u) \cap C(v) = \emptyset$ are at a distance $\|u-v\| \geq 2+2\varepsilon$.
    \label{prop3col1}
\end{prop}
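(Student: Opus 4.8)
The plan is to reduce everything to a single local obstruction. First I would establish a \textbf{color-avoidance principle}: if $w$ is any point with $\|w-u\|\in(1-\varepsilon,1+\varepsilon)$, then $C(w)\cap C(u)=\emptyset$. Indeed, suppose some color $c$ lies in both multicolors. Since $c\in C(u)$ and $c\in C(w)$, there are sequences of points $p_n\to u$ and $q_n\to w$, all of color $c$. Then $\|p_n-q_n\|\to\|u-w\|$, which lies strictly inside the open interval, so $\|p_n-q_n\|\in[1-\varepsilon,1+\varepsilon]$ for large $n$; as $\varphi(p_n)=\varphi(q_n)=c$ this contradicts that $\varphi$ is proper. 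The same statement holds with $u$ replaced by $v$. I would also record that every point has nonempty multicolor (its own color belongs to $C(\cdot)$), and that a trichromatic point has $|C(u)|=3$, so $|C(u)|+|C(v)|=6$; since $C(u)\cap C(v)=\emptyset$ and only $6$ colors occur, $C(u)\cup C(v)=\{1,\dots,6\}$.

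Next I would argue by contradiction. Assume $D:=\|u-v\|<2+2\varepsilon$ and aim to produce a single point $w$ lying in \emph{both} open annuli $A_u=\{x:\|x-u\|\in(1-\varepsilon,1+\varepsilon)\}$ and $A_v=\{x:\|x-v\|\in(1-\varepsilon,1+\varepsilon)\}$. If $D>2-2\varepsilon$ (in particular whenever $D\ge 2$), the midpoint $w=\tfrac12(u+v)$ works, since $\|w-u\|=\|w-v\|=D/2$ and $D/2\in(1-\varepsilon,1+\varepsilon)$ exactly because $2-2\varepsilon<D<2+2\varepsilon$. If instead $D<2$, I would take a point $w$ at norm-distance exactly $1$ from both centers, which sits in both annuli since $1\in(1-\varepsilon,1+\varepsilon)$. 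Once such a $w$ is found, the avoidance principle gives $C(w)\cap C(u)=\emptyset$ and $C(w)\cap C(v)=\emptyset$, hence $C(w)\cap\bigl(C(u)\cup C(v)\bigr)=C(w)\cap\{1,\dots,6\}=\emptyset$, forcing $C(w)=\emptyset$, which is absurd. Therefore $D\ge 2+2\varepsilon$.

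The main point requiring care is the existence, in the range $D<2$, of a common point of the unit circles $T_1(u)$ and $T_1(v)$, since unlike the Euclidean case there is no explicit formula. I would settle it by an intermediate-value argument: the map $x\mapsto\|x-v\|$ is continuous on the connected curve $T_1(u)$, and at the two points $u\pm(v-u)/\|v-u\|$, which both lie on $T_1(u)$, it takes the values $|1-D|$ and $1+D$ respectively; since $|1-D|<1<1+D$ for $0<D<2$, the value $1$ is attained, yielding a point $w$ with $\|w-u\|=\|w-v\|=1$. The remaining delicacy is purely bookkeeping: all annulus memberships must be kept as \emph{strict} inequalities, which is exactly why the conclusion is the sharp bound $\|u-v\|\ge 2+2\varepsilon$ (at $D=2+2\varepsilon$ the midpoint lands on the boundary $\|w-u\|=1+\varepsilon$, where the avoidance principle no longer applies).
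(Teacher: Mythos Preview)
Your argument is correct and follows the same idea as the paper: find a point $w$ whose distance to each of $u,v$ lies in the forbidden interval and observe that $\varphi(w)$ (equivalently, any color in $C(w)$) is then excluded from all of $C(u)\cup C(v)=\{1,\dots,6\}$. The paper states this in two lines, simply asserting the existence of an equidistant $w$ with $\|u-w\|=\|v-w\|=l\in(1-\varepsilon,1+\varepsilon)$; you supply the missing details, in particular a clean intermediate-value construction of $w$ valid for an arbitrary strictly convex norm, and you drop the unnecessary equidistance requirement. So the route is the same; your version is just more carefully justified.
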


\begin{proof}
    Otherwise, there will be a point $w$ for which \[\|u-w\| = \| v-w \| = l \in (1-\varepsilon,1+\varepsilon).\] 
Then $w$ cannot be colored because \[C(u) \cup C(v) = \{1, 2, \dots, 6\}. \qedhere\]
\end{proof}


\section{Colorings of the circle}

Denote by $T_1(x)$ a circle of unit radius centered at point $x$. If in a proper $6$-coloring of the plane the point $x$ is trichromatic, for example $C(x)=\{4,5,6\}$, then $T_1(x)$ must be colored in 1, 2, 3.  In fact, we might not have to prove that the proper 3-coloring of the unit circle exist, since otherwise the main result is immediately obtained. But without it the further presentation would look a little strange, although formally correct.

\begin{figure}
    \centering
    \includegraphics[width=8cm]{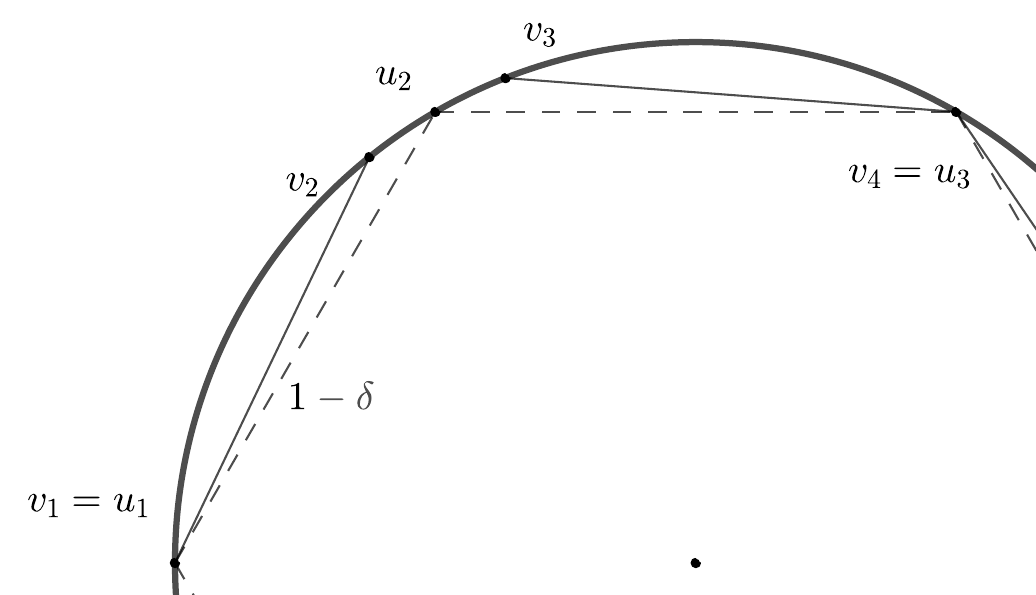}
    \caption{Construction of 9-gon in Proposition \ref{prop_9gon}}.
    \label{fig_9gon}
\end{figure}

\begin{prop}
    \label{prop_9gon}
    It is possible to properly color $T_1(x)$ in 3 colors, if $\varepsilon$ is small enough.    
\end{prop}

\begin{proof}
    Consider the equilateral hexagon $u_1, \dots u_6$ with unit side length inscribed in $T_1(x)$ (see Property \ref{property3}), and construct an almost degenerate 9-gon with three short sides, which is also inscribed in $T_1(x)$. Let $v_1=u_1$, $v_4=u_3$, $v_7=u_5$. Next to each of the vertices $u_2$, $u_4$, $u_6$ we put two new ``dependent'' vertices in such a way that $v_1 v_2=v_3 v_4=v_4 v_5 = v_6 v_7 = v_7 v_8 = v_9 v_1 = 1 - \delta$ (Fig. \ref{fig_9gon}). For convenience, define $v_{10}=v_1$, $v_{11}=v_2$. Let the color of the arc $v_{i} v_{i+1}$ be $c_i = (i\mod 3) + 1$, $i=1,2, \dots, 9$.

    By Property \ref{property2} and Corollary \ref{cor_arc_inclusion}, the arc lengths $f_i(\delta)=\|v_i -v_{i+1}\|$ and the distances between the arcs of the same color $g_i(\delta)= \|v_i -v_{i+2}\|$ are monotone in the neighborhood of zero. Moreover, $0<f_i(\delta)<1$, $g_i(\delta)>1$ for all $\delta>0$, $\delta<\delta_1$. Choose an arbitrary $\delta$ belonging to the interval $(\delta, \delta_1)$. Then there exists $\varepsilon>0$ such that $0<f_i(\delta)<1-\varepsilon$, $g_i(\delta)>1+\varepsilon$, $i=1,2, \dots, 9$. Thus, the coloring of $T_1(x)$ defined above will be proper for the forbidden interval of distances $[1-\varepsilon, 1+\varepsilon]$.
\end{proof}

Here we are interested in some properties of the proper 3-colorings of the unit circle. Note that by Proposition \ref{prop3col1} there are no trichromatic points on $T_1(x)$. As before, to avoid some technical difficulties, we will consider only the colorings of the circle generated by a proper discrete coloring of the plane, i.e., a coloring with sufficiently small monochromatic hexagonal tiles. For such colorings let us formulate the following

\begin{observ}
   A proper 3-coloring of the circle $T_1(x)$, generated by a hexagonal discrete coloring of the plane, contains a finite number of bichromatic points.
\end{observ}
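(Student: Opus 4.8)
The plan is to combine two facts: that the circle $T_1(x)$ is a strictly convex compact curve, and that in a hexagonal discrete coloring every point of chromaticity at least $2$ lies on the $1$-skeleton of the tiling (the union of the edges and vertices of the hexagons). First I would record that, since we work with a strictly convex norm $\|\cdot\|$, the circle $T_1(x)$ is the boundary of a strictly convex compact body. In particular it is a bounded Jordan curve that contains no straight line segment and meets every straight line in at most two points (a line meets a convex body in a segment, whose endpoints lie on the boundary, and strict convexity forbids the boundary from containing any subsegment of that line).

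Next I would localize the problem. Because $T_1(x)$ is bounded, it lies inside some disk of finite radius, and a bounded region can overlap only finitely many hexagons of fixed diameter $h>0$. Hence only finitely many tiles, and therefore only finitely many hexagon edges and vertices, meet the circle. Since the interior of each tile is monochromatic, any point of $T_1(x)$ lying in the interior of a hexagon has chromaticity $1$; thus every bichromatic point on $T_1(x)$ must lie on an edge or a vertex of the tiling. (Recall moreover that, by Proposition \ref{prop3col1}, $T_1(x)$ carries no trichromatic point at all, so vertices on the circle can only be bichromatic.)

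It then remains to bound the number of bichromatic points contributed by the edges and vertices. Each vertex is a single point, and there are finitely many of them among the finitely many relevant tiles. Each edge is a segment lying on a straight line; by strict convexity $T_1(x)$ meets that line, and hence the edge, in at most two points, and it cannot run along the edge, since a strictly convex curve contains no segment. Summing these finite contributions over the finitely many edges and vertices gives a finite total, which establishes the claim.

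The step I expect to carry the real content is the strict-convexity argument: it is exactly what forbids the circle from sharing an entire subarc with an edge of the tiling, which would otherwise produce a continuum of bichromatic points and break the conclusion. The compactness and finiteness bookkeeping is routine, and the absence of trichromatic points on the circle has already been secured, so the only genuinely necessary hypothesis is the strict convexity of the norm fixed earlier in the section.
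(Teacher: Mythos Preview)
Your proof is correct and follows essentially the same approach as the paper: bichromatic points lie on the hexagonal tiling's edges, strict convexity forces $T_1(x)$ to meet each straight line in at most two points, and only finitely many edges meet the bounded circle. The paper compresses this into two sentences (``the borders of the tiles are line segments'' and ``the unit circle cannot contain a line segment in the case of a strictly convex norm''), while you spell out the compactness bookkeeping explicitly; but the content is the same.
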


This follows from the fact that the borders of the tiles are line segments. The unit circle $T_1(x)$ cannot contain a line segment in the case of a strictly convex norm.

\begin{observ}
If the diameter of the arc $pq \subset T_1(x)$ is at least $1$, and $pq$ contains a point of color $1$, then $pq$ also contains  a bichromatic point $u$, such that $1 \in C(u)$.
\end{observ}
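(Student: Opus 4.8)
The plan is to argue by contradiction: assume the arc $pq$ contains a point of color $1$ but no bichromatic point $u$ with $1 \in C(u)$, and then show that the whole arc must be monochromatic of color $1$, which the diameter hypothesis forbids.

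First I would exploit the discrete structure. Since the coloring is generated by a hexagonal discrete coloring, the preceding observation guarantees that $pq$ contains only finitely many bichromatic points; these partition $pq$ into finitely many monochromatic sub-arcs, and the color can change only at a bichromatic point. Moreover, at a bichromatic point $u$ with $C(u)=\{i,j\}$, the two sub-arcs meeting at $u$ carry precisely the colors $i$ and $j$. Now suppose toward a contradiction that no bichromatic point $u\in pq$ satisfies $1\in C(u)$. Then at every bichromatic point of $pq$ the two adjoining colors lie in $\{2,3\}$, so a color-$1$ sub-arc can never abut a bichromatic point. Consequently the set of color-$1$ points of $pq$ has empty boundary in $pq$; being clopen and nonempty in the connected arc $pq$, it must be all of $pq$. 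In other words, the entire arc $pq$ would be monochromatic of color $1$.

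Finally I would contradict this with the diameter assumption. Parametrising $pq$ by a path $\gamma:[0,1]\to T_1(x)$, the map $(s,t)\mapsto \|\gamma(s)-\gamma(t)\|$ is continuous on the connected compact square $[0,1]^2$, vanishes on the diagonal, and attains the value $\operatorname{diam}(pq)\geq 1$; hence by the intermediate value theorem it takes the value $1$. Thus there exist $a,b\in pq$ with $\|a-b\|=1\in[1-\varepsilon,1+\varepsilon]$. If $pq$ were entirely of color $1$, then $a$ and $b$ would form a monochromatic pair at a forbidden distance, violating properness. This contradiction forces $pq$ to contain a bichromatic point $u$ with $1\in C(u)$.

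The only step requiring genuine care is the clopen ("no leakage") argument: verifying that, under the assumption of no color-$1$ boundary point, the color $1$ cannot spread to or retreat from a neighbouring sub-arc. This is where the local structure of the discrete coloring is essential, since it reduces the behaviour of the coloring along $pq$ to a finite sequence of monochromatic sub-arcs whose colors switch only at the (finitely many) bichromatic points, each carrying a definite pair of colors.
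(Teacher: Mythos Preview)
Your argument is correct and is essentially the paper's own proof unpacked in full detail: the paper simply remarks that the claim ``follows directly from the definition of multicolor and from the fact that such an arc cannot be monochromatic,'' which is exactly your two steps (the clopen/sub-arc argument gives the first clause, and your IVT argument gives the second). There is no genuine methodological difference, only a difference in level of detail.
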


This follows directly from the definition of multicolor and from the fact that such an arc cannot be monochromatic.

\begin{observ}
    One can assume that the circle does not contain isolated points of any color, i.e. points of color $i$, such that in some neighborhood there are no other points of color $i$. 
\end{observ}

In fact, such points can be recolored to the color of their neighborhood preserving the proper coloring of the circle.
 
\begin{defin}
We call the alternation of bichromatic points (and arc colors) \textbf{cyclic} if one does not encounter two arcs of the same color with exactly one arc between them  when going around a circle. I.e. the colors of the arcs when going around the circle are either 1, 2, 3, 1, 2, 3, ... or 1, 3, 2, 1, 3, 2, ..., and there are no subsequences of the form $a, b, a$. We will call a coloring with cyclic alternation of colors a \textbf{cyclic coloring}.
\end{defin}

Here we define a transform which, for an arbitrary strictly convex norm, can be regarded as some analogue of the rotation of the Euclidean circle.

\begin{defin}
    \label{def_rotation}
    Let us call a counterclockwise $d$-rotation $\psi_d(\cdot)$, $0<d<2$ such a mapping of the circle $T_1(x)$ into itself that  $\forall u \in T_1(u): \;\|u - \psi(u)\|=d$ is satisfied, and of the two possible points, the point closest to $u$ is chosen when traversing the circle counterclockwise. Similarly, if the point closest to $u$ is chosen when traversing clockwise, then we call such a mapping $\psi_{-d}(\cdot)$ a clockwise $d$-rotation. The most important case is 1-rotation.
\end{defin}

Correctness of Definition \ref{def_rotation} is provided by Property \ref{property2}. Moreover, from this Property~\ref{property2} one can derive that the mappings $\psi_d$, $\psi_{-d}$ are one-to-one. Note that \[\forall u \in T_d(x): \; \psi_{-d}(\psi_d(u))=u,\] thus $\psi_{-d} = \psi_d^{-1}$. Also note that and it follows from Property \ref{property3} that $\psi_1^6= \psi_1^0=\operatorname{Id}$.

\begin{defin}
    Let $u_1v_1$, $u_2 v_2$ are arcs of the unit circle such that $\|u_1-v_1\|<1$, $\|u_2-v_2\|<1$. We will say that arc $u_2 v_2$ is shorter than arc $u_1 v_1$ and use the notation $u_2 v_2 < u_1 v_1$ if there is such $k\in \{0,1, \dots, 5\}$ that $u_2 v_2$ is a proper subset of arc $u_1^{(k)}v_1^{(k)}$, where $u_1^{(k)}=\psi_1^k(u_1)$, $v_1^{(k)}=\psi_1^k(v_1)$.
\end{defin}

If the distance between the ends of the arc $uv$ is less than 1, its images $u^{(k)}v^{(k)}$, $k=0,1, \dots, 5$ are paiwise disjoint. Hence, the arc $uv$ cannot be shorter than itself. If $u_2 v_2 < u_1 v_1$, then for the union of images we have
 \[
     \bigcup_{k=0}^5 u_2^{(k)}v_2^{(k)} \subsetneq \bigcup_{k=0}^5 u_1^{(k)}v_1^{(k)}.
 \]    
If $u_2 v_2 > u_1 v_1$, then the inclusion must be reversed. Also from the properties of set inclusion we can deduce that $u_1v_1>u_2 v_2$, $u_2 v_2> u_3 v_3$ implies $u_1 v_1 > u_3 v_3$. Thus, this relation has the properties of irreflexivity, antisymmetry, transitivity, and we obtain a strict partial order defined on the arcs of the unit circle of diameter less than 1. 

 
\begin{prop}
An arbitrary proper coloring of the circle $T_1(x)$ with a finite number of bichromatic points can be reduced to a proper cyclic coloring after a finite number of recoloring of arcs between bichromatic points.  \label{circ_recolor}  
\end{prop}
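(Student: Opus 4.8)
The plan is to eliminate the forbidden local pattern one occurrence at a time by recolouring single arcs. Since there are finitely many bichromatic points, $T_1(x)$ is cut into finitely many monochromatic arcs, and two arcs meeting at a bichromatic point carry different colours; reading the colours around $T_1(x)$ gives a cyclic sequence over $\{1,2,3\}$ with distinct neighbours. By definition the colouring is cyclic exactly when this sequence has no factor $a,b,a$. I would therefore search for such a \emph{backtrack}, i.e. three consecutive arcs $A_1,M,A_2$ coloured $a,b,a$, and recolour the middle arc $M$ from $b$ to $a$; this fuses $A_1\cup M\cup A_2$ into one arc of colour $a$ and lowers the number of arcs by two. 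As the number of arcs is a positive integer that strictly decreases, the process must stop; provided a backtrack can always be removed while one is present, it can stop only at a colouring with no backtrack, that is, at a cyclic one. Everything therefore reduces to showing that a suitable backtrack is always removable without destroying properness.

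First I would record that a backtrack occupies a short arc. Every monochromatic arc has diameter $<1-\varepsilon$: being connected it realises all distances in $[0,\operatorname{diam}]$, so a diameter of at least $1-\varepsilon$ would produce a monochromatic pair at a forbidden distance. In particular $\operatorname{diam}M<1-\varepsilon$, so the two endpoints of $M$, which both carry colour $a$, lie at distance $<1-\varepsilon$; these are precisely the closest points of $A_1$ and $A_2$. Since $A_1\times A_2$ is connected, the distances realised between $A_1$ and $A_2$ form an interval whose least value is at most $\operatorname{diam}M<1-\varepsilon$, and as colour $a$ is proper this interval avoids $[1-\varepsilon,1+\varepsilon]$ and hence lies below $1-\varepsilon$. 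Therefore the whole block $A_1\cup M\cup A_2$ has diameter $<1-\varepsilon<1$.

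The crux, which I expect to be the main obstacle, is that recolouring $M$ to $a$ keeps the colouring proper. The only dangerous pairs are $(w,y)$ with $y\in M$, with $w$ of colour $a$ outside the block, and $\|w-y\|\in[1-\varepsilon,1+\varepsilon]$. Here I would use that the distance $q\mapsto\|w-q\|$ has no interior extremum along a short arc that lies at distance about $1$ from $w$ (extrema occur only where $q$ is nearest or farthest from $w$ on the whole circle), and so is monotone along the block. Consequently the extreme values of this distance over the block are attained on the flanks $A_1,A_2$, and a forbidden value reached in the interior of $M$ would be squeezed between the flank values. Since both flanks are already proper for colour $a$, the only way this can fail is a \emph{straddle}, in which $w$ lies within $1-\varepsilon$ of one flank and beyond $1+\varepsilon$ of the other while $\|w-\cdot\|$ sweeps across the entire band $[1-\varepsilon,1+\varepsilon]$ inside $M$.

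To rule out the straddle I would remove not an arbitrary backtrack but one whose middle arc $M$ is shortest among all backtracks. When $\operatorname{diam}M$ is small a straddle is impossible, since it would force $\|w-\cdot\|$ to vary by more than the band width $2\varepsilon$ along $M$. In the remaining case the straddle presents $w$ and the near flank as two colour-$a$ arcs spanning a set of diameter $<1-\varepsilon$; tracking the colours on this short span, I would locate a backtrack whose middle is strictly shorter than $M$ — for instance the near flank itself, when its two neighbours repeat a colour — contradicting the choice of $M$. Turning this into a clean case analysis, which must also dispose of configurations where the short span is a tightly wound forward cycle rather than a backtrack, is the delicate heart of the argument; granting it, a shortest-middle backtrack is always safely removable, the recolouring stays proper, and iterating yields a proper cyclic colouring after finitely many arc recolourings.
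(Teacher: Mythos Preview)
Your overall strategy matches the paper's: find a backtrack $a,b,a$, recolour the middle arc to $a$, and handle the obstruction by passing to a backtrack with \emph{shortest} middle arc. Where you diverge is in the crucial step of showing the shortest middle can actually be recoloured. You route this through a monotonicity claim for $q\mapsto\|w-q\|$ along the block and a ``straddle'' case, and you explicitly acknowledge that closing the straddle case is ``the delicate heart of the argument'' which you leave unresolved. That is a real gap: your sketch for extracting a shorter backtrack from the straddle (``tracking the colours on this short span'') does not produce a concrete shorter backtrack, and the monotonicity premise itself needs justification for an arbitrary strictly convex norm.

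The paper closes this step with a much simpler observation that you are missing. The endpoints $u,v$ of the middle arc carry multicolor $\{a,b\}$, so the four points $u',v',u'',v''$ on $T_1(x)$ at unit distance from $u,v$ (one on each side) must be the third colour $c$. Any point of colour $a$ at forbidden distance from the middle arc lies on one of the dual arcs $u'v'$ or $u''v''$; but that dual arc then reads $c,\ldots,a,\ldots,c$, which already forces a backtrack strictly inside it, hence with middle strictly shorter than $uv$. Choosing the backtrack of minimal middle length therefore makes the recolouring immediately safe, with no monotonicity argument and no straddle case. Your preliminary observation that the block $A_1\cup M\cup A_2$ has diameter below $1-\varepsilon$ is correct but turns out to be unnecessary once you use the third-colour framing of the dual arcs.
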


\begin{proof}
If the alternation is not cyclic, we will find a triple of arcs with colors 1, 2, 1 (without loss of generality). Denote by $u$, $v$  the bichromatic points at the ends of the arc of color $2$. Let $u'=\psi_{-1}(u)$, $v'=\psi_{-1}(v)$ be points obtained by going clockwise by~$1$. Similarly, we determine $u''=\psi_{1}(u), v''=\psi_{1}(u)$ by going counterclockwise. Then $u'$, $v'$, $u''$, $v''$ together with some neighborhood of these points have color 3. We have to deal with two cases: either it is possible to recolor the arc $uv$ with color 1 while keeping the coloring proper, or at least one of the arcs $u'v'$, $u''v''$, contains arcs that alternate non-cyclically. 
Among them, there is an arc $u_1 v_1$ that separates two arcs of the same color, and $u_1 v_1 < uv$. In this case, only an arc $u_2 v_2$ that is shorter than $u_1 v_1$ can prevent $u_1 v_1$ from being recolored. Since there are a finite number of arcs, we can choose the one that is shorter than all the others to which we can compare it using partial order.

Then nothing prevents its recoloring in the color of neighboring arcs. In a finite number of steps all such arcs will be eliminated, and we will have a cyclic alternation.
\end{proof}

\begin{defin}
    Denote by $M_{ab}$ the set of points of multicolor $\{a,b\}$ on the circle $T_1(x)$. 
\end{defin}   

\begin{prop}
    When $T_1(x)$ is properly colored, each of the sets $M_{12}$, $M_{23}$, $M_{13}$ contains at least three points. Moreover, one can choose three points of each of the multicolors in such a way that the pairwise distances  will be greater than 1.    
    \label{multicol}
\end{prop}


\begin{proof}
    Suppose the contrary. Using Proposition~\ref{circ_recolor}, we make the color alternation cyclic, and in this process some bichromatic points will disappear.
    Then the number of bichromatic points of each multicolor will be the same. But if this number is less than 3, then the properly colored circle is divided into 3 or 6 monochromatic arcs, which is impossible, because in this case one of the arcs has diameter at least $1$ and contains an edge of the graph (see Property \ref{property3}).

Suppose that the alternation is cyclic and there is no triplet of points $u,v,w \in M_{12}$ for which  the pairwise distances between $u,v,w$ are greater than $1$. 

Then we find the points $u_1, u_2 \in M_{12}$, $\|u_1-u_2\|<1$. Since the alternation of the bichromatic points is cyclic, one can find the points $v \in M_{23}$, $w \in M_{31}$  on the arc $u_1 u_2$. Consider the points $v'=\psi_1(v), w'=\psi_1(w)$. Observe that $c(v')=1$, $c(w')=2$, and in a cyclic coloring the arc of color 1 is followed by the arc of color 2. Hence the arc $v'w'$ contains the point $u_3 \in M_{12}$, and $\|u_2 - u_3\|<1$. By induction, the distance between any two neighboring points in $M_{12}$ when going around the circle is less than 1. Then  divide the circle into 6 arcs by the vertices of a equilateral hexagon (by Property~\ref{property3}). Here we assume that the hexagon was constructed in such a way that the vertices do not coincide with points from $M_{12}$. After that we are able to choose three points from $M_{12}$, belonging three distinct arcs, the pairwise distances between which are equal to 1.
\end{proof}

\begin{prop}
Let the unit disc $U_1(x)$ with the boundary $T_1(x)$ be properly 3-colored. Some pairs of points from $M_{12}$, are connected by pairwise non-intersecting  polylines of multicolor $\{1,2\}$, belonging to the interior of the circle (i.e. the boundaries between regions of colors 1 and 2). Then there are three points $u,v,w \in M_{12}$ that are at pairwise distances of at least 1 and are not connected to other points from $M_{12}$ by polylines.
    \label{prop_connect}
\end{prop}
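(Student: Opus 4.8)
The plan is to isolate one geometric estimate as the engine and then feed Proposition~\ref{multicol} through it. Throughout, a connecting polyline $\gamma$ joining $p,q\in M_{12}$ consists entirely of points of multicolor $\{1,2\}$, and its two ends $p,q$ lie in $M_{12}$. The engine is the claim that \emph{a connected pair is close}: if $p,q$ are joined by such a $\gamma$, then $\|p-q\|<1$. To prove it, fix the endpoint $p$ and let $z$ traverse $\gamma$. Since $p$ and $z$ both have points of colors $1$ and $2$ in every neighborhood, a value $\|p-z\|\in[1-\varepsilon,1+\varepsilon]$ would let us choose two points of color $1$, one near $p$ and one near $z$, at a forbidden distance, contradicting that the circle together with the polylines is properly colored. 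Hence $\|p-z\|\notin[1-\varepsilon,1+\varepsilon]$ for every $z\in\gamma$; as $z\mapsto\|p-z\|$ is continuous and vanishes at $z=p$, it cannot jump across the gap $(1-\varepsilon,1+\varepsilon)$, so it stays below $1-\varepsilon$. In particular $\|p-q\|<1-\varepsilon$, and the whole polyline lies in the ball $\{z:\|z-p\|<1-\varepsilon\}$.

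Next I would record a structural dichotomy for an individual point $p\in M_{12}$: the boundary curve between colors $1$ and $2$ emanating from $p$ into the disk either reaches another boundary point $q\in M_{12}$ (so $p$ is connected, and by the engine $\|p-q\|<1$), or it terminates at an interior trichromatic point (so $p$ is \emph{free}). By Proposition~\ref{multicol} there is a triple in $M_{12}$ with pairwise distances exceeding $1$; by the engine no two of them are connected to \emph{each other}. The remaining task — and the heart of the proposition — is to arrange that the three chosen points are individually free. Here I would exploit that the polylines are pairwise non-intersecting, so the connected pairs form a non-crossing (laminar) partial matching of the finite set $M_{12}$ in which every matched chord has norm-length below $1$; one checks that a chord of length below $1$ cuts off a short arc of the circle, confining each matched pair and all polylines nested beneath it to a short portion of $T_1(x)$.

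Finally, I would upgrade ``pairwise unconnected'' to ``individually free'' by a relocation/induction argument over the number of polylines, peeling off an innermost chord. The key observation is that a connecting chord forces the short arc it cuts off to carry only colors $1$ and $2$: a color-$3$ subarc there would create an interior trichromatic point, and the $1$--$2$ boundary would then end at that point rather than at the partner endpoint, making both ends free after all. Thus every genuinely matched pair hides a short, color-$\{1,2\}$ arc disjoint from the far-apart triple supplied by Proposition~\ref{multicol}, and a local recoloring supported on that short arc removes the pair from $M_{12}$, decreases the number of polylines, and leaves every point outside the arc — in particular our three candidates — untouched; the base case of a coloring with no interior polylines is immediate, since then every point of $M_{12}$ is free. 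I expect the main obstacle to be exactly the bookkeeping of this surgery: one must verify that recoloring the short arc (whose diameter is below the forbidden interval, so internally harmless) creates no same-colored pair at a forbidden distance against the unchanged exterior colors, and that deleting the pair neither merges the surviving arcs non-cyclically nor turns a previously free point into a matched one. The diameter bound and the laminar structure of the matching are precisely what should render this local surgery harmless and keep the three far-apart free points inherited from the reduced coloring.
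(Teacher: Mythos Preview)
Your overall architecture matches the paper's: establish the ``engine'' (a connected pair $p,q\in M_{12}$ satisfies $\|p-q\|<1$), perform a recoloring surgery on the arc $pq$ to eliminate the pair, iterate until no connected pairs remain, and then invoke Proposition~\ref{multicol}. The paper also applies Proposition~\ref{multicol} \emph{after} all surgeries, not before; your attempt to fix three candidates in advance and shepherd them through the induction is unnecessary and creates the bookkeeping headaches you anticipate.

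The substantive gap is in the justification of the surgery itself. You assert that the short arc $pq$ carries only colors $1$ and $2$, arguing that a color-$3$ subarc would force an interior trichromatic point that would intercept the $\{1,2\}$-boundary. In the abstract setting of the proposition there is no such implication: we are handed a circle and some polylines, and nothing prevents the arc from containing a color-$3$ subarc while the given polyline still runs from $p$ to $q$. More importantly, this claim is not what makes the recoloring safe against the \emph{exterior} of the arc, which you correctly flag as the real obstacle but then attribute only to ``the diameter bound and the laminar structure.''

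The paper's key observation, which you are missing, is a shielding argument: for any $z\in T_1(x)$, the unit circle $T_1(z)$ passes through the center $x$; hence if $T_1(z)$ meets the arc $pq$ it must cross the polyline (which separates the arc from $x$ inside the disk). Thus $z$ is at unit distance from a point of multicolor $\{1,2\}$ on the polyline, forcing $\varphi(z)=3$. Consequently the arc $pq$ may be overwritten by either color $1$ or $2$ and the \emph{circle} $T_1(x)$ remains properly colored, regardless of what colors the arc originally carried. This single geometric fact replaces all of your proposed bookkeeping.
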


\begin{proof}
Under these conditions, a pair of points $p,q\in M_{12}$ connected by a polyline of multicolor $\{1,2\}$ is at a distance of less than 1. Consider a region $Q \subset U_1(x)$ of diameter less than 1, bounded by a polyline $pq$ and an arc of a circle. 

Then any circle of unit radius with center $z \in U_1(x)$ that intersects $Q$ also intersects the polyline $pq$. Indeed, the circle $T_1(z)$ must intersect the boundary of $Q$. Note that $T_1(z)$ cannot intersect an arc of diameter less than 1 by two points by Observation~\ref{observ_chord}. If $T_1(z)$ intersects the arc $pq$ by one point, then one of the points p, q lies inside the circle and the other outside, and hence the circle intersects a polyline. If $T_1(z)$ does not intersect the arc $pq$, we immediately have the required.

Thus, the whole region $Q$ can be colored in 1 or 2 preserving the proper coloring of \emph{the unit disc} $U_1(x)$. Let us recolor the $Q$ in the color that is adjacent to the polyline $pq$ from the outside of $Q$. The points $p, q$ will cease to be bichromatic. Repeating this operation as long as possible, we obtain a proper coloring of the circle without pairs of connected points. In this coloring, according to Proposition~\ref{multicol}, we will find the required triple of points.
\end{proof}

\section{Coloring of complementary arcs}

In this section, we will be interested in the proper 3-coloring of the two arcs along which the ends of the unit segment are running. 

\begin{defin}
    
Let the functions $\gamma_1:[0,1] \to \mathbb{R}^2$, $\gamma_2:[0,1] \to \mathbb{R}^2$ be continuous, and besides,
\[
    \|\gamma_1(t)-\gamma_2(t)\| \in (1-\varepsilon, 1+\varepsilon), \quad \forall t \in [0,1].
\]

\begin{figure}
    \centering
    \includegraphics[width=12cm]{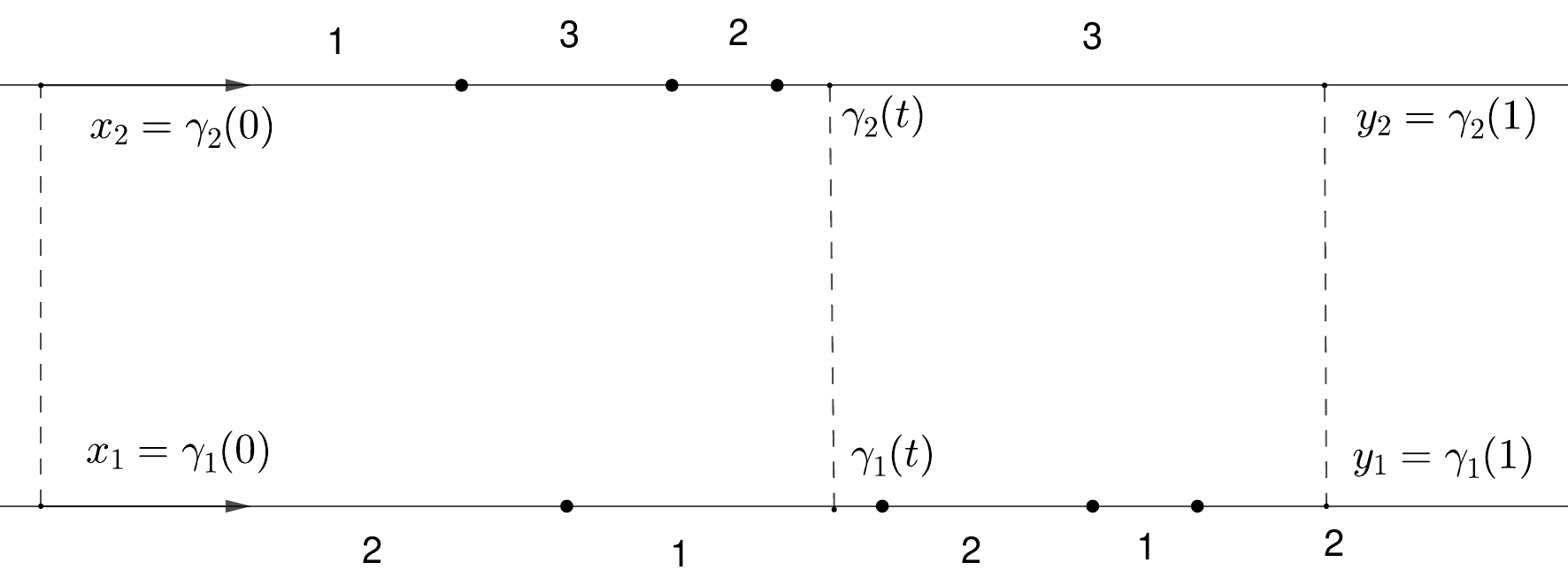}
    \caption{A pair of complementary arcs colored in $1, 2, 3$}
    \label{fig_arcs}
\end{figure}

In this case, the directed arcs $x_i y_i = \gamma_i([0,1]), i=1,2$ will be called \textbf{complementary}. We will also say that the directed arc $x_1 y_1$ is complementary to the directed arc $x_2 y_2$, and vice versa.
\end{defin}

Furthermore, suppose that the arcs $x_1 y_1$, $x_2 y_2$ are properly colored in $1, 2, 3$ with the forbidden interval $[1-\varepsilon, 1+\varepsilon]$, and that the number of bichromatic points on these arcs is finite (Fig.~\ref{fig_arcs}). It follows, in particular, that there are no trichromatic points. Let the coloring, as above, be defined by the function $\varphi$ with values in the set $\{1,2,3\}$. 

Note that here we consider only the coloring of the arcs, not the whole plane. Accordingly, we determine the chromaticity of the points on the arcs only by the colors of the points belonging to the arcs. 

\begin{defin}
    Let the point $x = \gamma(t_0)$, $0<t_0<1$ be bichromatic, and 
    \[
        \lim_{t\to t_0-0} \varphi(\gamma(t)) = a, \quad \lim_{t\to t_0+0} \varphi(\gamma(t)) = b.
    \]
We will call the index of the point $x$ the number 
    \[
    \operatorname{ind} x = \begin{cases}
            1, & \text{if } b \equiv a+1 \pmod{3}, \\\\
            -1, & \text{otherwise}.            
    \end{cases}    
    \]
\end{defin}

\begin{defin}
    Let the \textbf{index} $\operatorname{Ind} x y$ of a directed arc $x y$ be the sum of the indices of all bichromatic points belonging to $x y=\gamma([0,1])$.
\end{defin}

It follows directly from the definition that
\begin{observ}
    If $x_i y_i = \gamma_i([0,1]), i=1,2$ and $\gamma_2(t)=\gamma_1(1-t)$, then 
    \[
        \operatorname{Ind} x_2 y_2 = -\operatorname{Ind} x_1 y_1.
    \]
\end{observ}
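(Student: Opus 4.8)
The plan is to show that reversing the orientation of the arc negates the index of every single bichromatic point, so that summing over all of them negates the total index. First note that $t \mapsto 1-t$ is a homeomorphism of $[0,1]$ interchanging the endpoints, so $\gamma_1$ and $\gamma_2$ trace exactly the same point set, and the interior bichromatic points of the two parametrizations are in bijection: a point $\gamma_1(t_0)$ with $0<t_0<1$ is the same as $\gamma_2(1-t_0)$ with $0<1-t_0<1$. Thus both indices are finite sums over the same underlying set of points, and it suffices to compare them termwise.

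The key step is to compute how the one-sided color limits transform under the reparametrization. Let $x=\gamma_1(t_0)$ be bichromatic with $a=\lim_{t\to t_0-0}\varphi(\gamma_1(t))$ and $b=\lim_{t\to t_0+0}\varphi(\gamma_1(t))$, and set $s_0=1-t_0$. As $s\to s_0-0$ we have $1-s\to t_0+0$, so $\lim_{s\to s_0-0}\varphi(\gamma_2(s))=b$; symmetrically $\lim_{s\to s_0+0}\varphi(\gamma_2(s))=a$. Hence passing from $\gamma_1$ to $\gamma_2$ simply swaps the left and right colors at each bichromatic point.

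It remains to check the effect of this swap on the index via the mod-$3$ arithmetic. By definition $\operatorname{ind}_{\gamma_1}x=+1$ exactly when $b\equiv a+1\pmod 3$, whereas for $\gamma_2$ the left color is $b$ and the right color is $a$, so $\operatorname{ind}_{\gamma_2}x=+1$ exactly when $a\equiv b+1\pmod 3$. Since $x$ is bichromatic we have $a\neq b$, so $b\in\{a+1,a-1\}\pmod 3$; the condition $a\equiv b+1$ is equivalent to $b\equiv a-1\equiv a+2\pmod 3$, and because $a+1\not\equiv a-1\pmod 3$ the two conditions are mutually exclusive and jointly exhaustive. Therefore exactly one of $\operatorname{ind}_{\gamma_1}x$, $\operatorname{ind}_{\gamma_2}x$ equals $+1$ and the other $-1$, i.e. $\operatorname{ind}_{\gamma_2}x=-\operatorname{ind}_{\gamma_1}x$.

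Summing this identity over the common finite set of interior bichromatic points yields $\operatorname{Ind}x_2y_2=-\operatorname{Ind}x_1y_1$. The only delicate point — and the nearest thing to an obstacle — is the bookkeeping of the one-sided limits under the orientation-reversing map together with the verification that the two residue conditions are genuinely complementary modulo $3$; both are elementary once the limit identifications above are made precise.
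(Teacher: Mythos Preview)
Your proof is correct and is exactly the direct verification the paper has in mind; the paper itself gives no argument beyond the remark that the observation ``follows directly from the definition,'' and your write-up simply unpacks that remark by checking that the orientation reversal swaps the one-sided color limits at each interior bichromatic point and hence flips every local index.
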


Let us prove a less trivial statement.
\begin{prop}
    The indices of the complementary arcs $ x_1 y_1=\gamma_1([0,1])$, $ x_2 y_2=\gamma_2([0,1])$ differ by no more than 1. Moreover, if we define the color of the starting points
\[
a_i = \varphi(\gamma_i(0)), \quad i=1,2,
\]
    then we have the following implications
\[
       a_2 \equiv a_1+1 \pmod{3}  \Rightarrow \operatorname{Ind} x_1 y_1 - \operatorname{Ind} x_2 y_2 \in \{-1, 0\};
\]
\[
       a_2 \equiv a_1-1 \pmod{3}  \Rightarrow \operatorname{Ind} x_1 y_1 - \operatorname{Ind} x_2 y_2 \in \{0,1\}.
\]
\label{prop_ind}
\end{prop}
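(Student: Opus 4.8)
The plan is to track the color difference of the two complementary arcs as a single $\mathbb{Z}/3$-valued function of the parameter $t$ and to read off the index difference from its jumps. Set $c_i(t) = \varphi(\gamma_i(t))$ and $d(t) = c_2(t) - c_1(t) \bmod 3$. The starting observation is that since $\|\gamma_1(t) - \gamma_2(t)\| \in (1-\varepsilon, 1+\varepsilon)$ for every $t$ and $\varphi$ is proper, the complementary points always carry distinct colors, so $c_1(t) \neq c_2(t)$ and hence $d(t) \in \{1, 2\}$ at every $t$ where neither point is bichromatic. In particular $d(0) = a_2 - a_1 \bmod 3 \in \{1, 2\}$, and the two implications to be proved correspond exactly to the cases $d(0) = 1$ (that is, $a_2 \equiv a_1 + 1$) and $d(0) = 2 \equiv -1$ (that is, $a_2 \equiv a_1 - 1$); note that $a_1 \neq a_2$ already because $\gamma_1(0),\gamma_2(0)$ lie at a forbidden distance.

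Next I would analyze a single bichromatic point. Assuming, after an arbitrarily small perturbation of the arcs (which preserves the hypotheses and leaves all indices unchanged, since there are no trichromatic points on the arcs), that no bichromatic point of $x_1 y_1$ shares its parameter with one of $x_2 y_2$, the function $d$ is piecewise constant and jumps only at the union of the two finite sets of bichromatic parameters, one jump at a time. At a bichromatic point of arc~$1$ where $c_1$ jumps from $a$ to $b$, properness forces the locally constant color $e$ of arc~$2$ to satisfy $e \neq a$ and $e \neq b$; as $a \neq b$, this gives $\{a,b,e\} = \{1,2,3\}$, so $e$ is the third color (and symmetrically for arc~$2$). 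A direct check of the four cases (arc~$1$ or arc~$2$; index $+1$ or $-1$) then shows that every crossing flips $d$ between $1$ and $2$, and more precisely that the signed contribution $\sigma_j \epsilon_j$ — where $\epsilon_j$ is the index of the crossing and $\sigma_j = +1$ for an arc-$1$ crossing, $\sigma_j = -1$ for an arc-$2$ crossing — equals $+1$ exactly when $d$ jumps $2 \to 1$ and equals $-1$ exactly when $d$ jumps $1 \to 2$.

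Finally I would assemble these local facts. By definition $\operatorname{Ind} x_1 y_1 - \operatorname{Ind} x_2 y_2 = \sum_j \sigma_j \epsilon_j$, the sum running over all crossings in order of increasing parameter. Because $d$ takes only the two values $1, 2$ and flips at every crossing, the sequence of contributions $\sigma_j \epsilon_j$ is alternating: starting from $d(0) = 1$ it reads $-1, +1, -1, \dots$, and starting from $d(0) = 2$ it reads $+1, -1, +1, \dots$. An alternating $\pm 1$ sum has total value in $\{-1, 0\}$ in the first case and in $\{0, 1\}$ in the second, which yields both displayed implications at once and, combining them, the bound $|\operatorname{Ind} x_1 y_1 - \operatorname{Ind} x_2 y_2| \le 1$.

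I expect the main obstacle to be the bookkeeping of the second step: verifying in all four cases that the index sign, the identity of the arc, and the direction of the jump of $d$ fit together as claimed, and pinning down the sign conventions so that $\sigma_j \epsilon_j$ matches the $2\to1$ / $1\to 2$ dichotomy. The only genuinely delicate point beyond this routine case analysis is justifying the general-position assumption that excludes simultaneous bichromatic points on the two arcs; a genuine simultaneous color-swap such as $(1,2) \to (2,1)$ would contribute $\pm 2$ to the sum and break the alternation, so it must be ruled out (or resolved into two ordered single crossings through a valid intermediate pair) before the clean alternating-sum count applies.
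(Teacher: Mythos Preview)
Your argument is correct and essentially the same as the paper's: the paper tracks the pair $(\varphi(\gamma_1(t)),\varphi(\gamma_2(t)))$ as a walk on the $6$-cycle $C_6$ of ordered pairs of distinct colors, observes that the accumulated index difference changes by $\pm1$ at each step with sign determined by which of the two tripartite classes of $C_6$ one is leaving, and concludes by the same alternating-sum count. Your function $d(t)=c_2(t)-c_1(t)\bmod 3\in\{1,2\}$ is exactly the bipartition of that $C_6$, so the two presentations coincide; your explicit handling of simultaneous bichromatic parameters via a small reparametrization of one arc is a point the paper leaves implicit.
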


\begin{figure}[ht!]
    \centering
    \includegraphics[width=6cm]{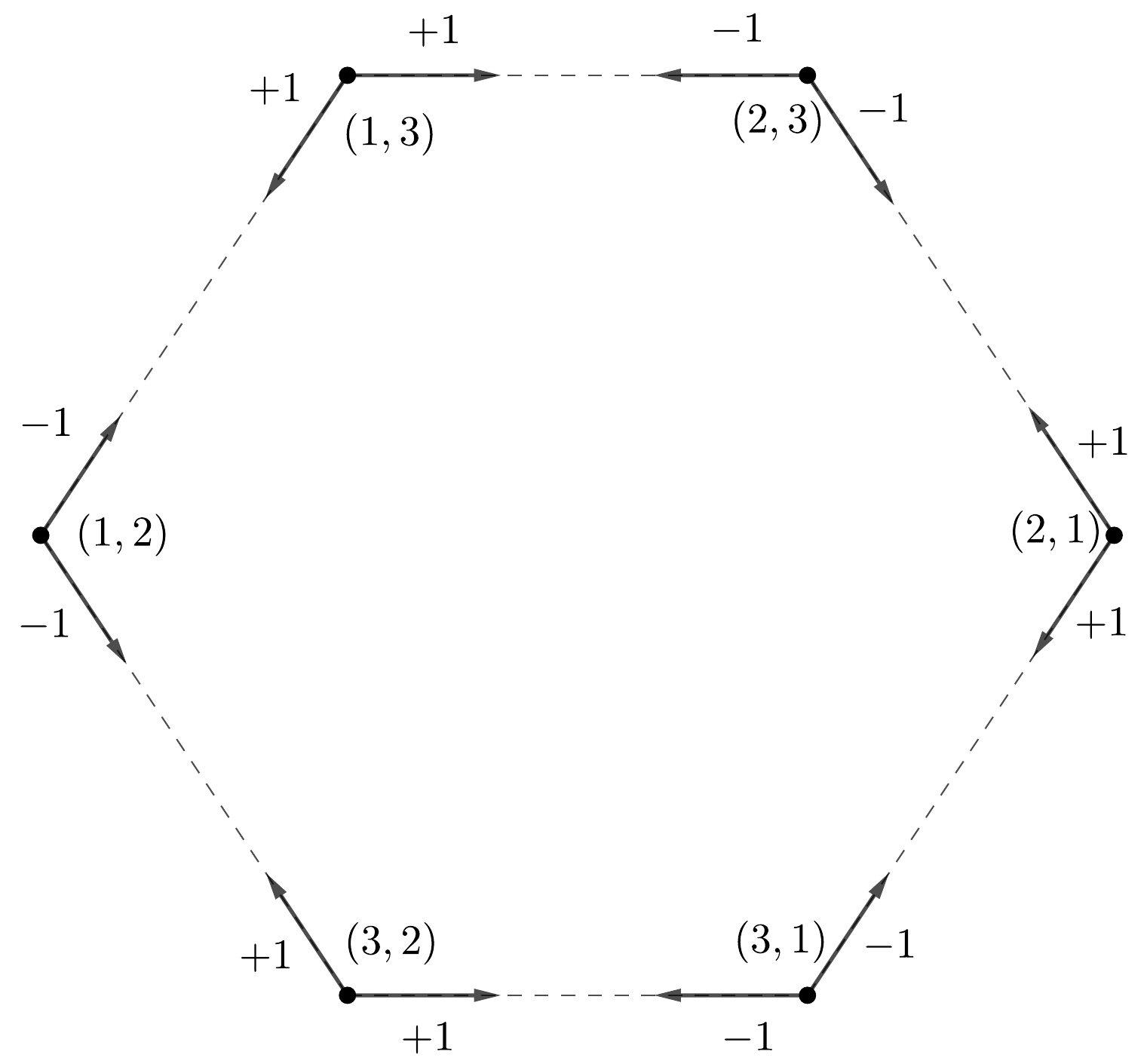}
    \caption{Admissible transitions between pairs of colors}
    \label{graphc6}
\end{figure}

\begin{proof}
It is enough to apply induction by the number of bichromatic points, arranging them by increasing $t$. Let us follow a pair of colors $$(\varphi(\gamma_1(t)),\varphi(\gamma_2(t)).$$ Then the sequence of bichromatic points at $t$ varying from $0$ to $1$ corresponds to some path on the 6-vertex graph $C_6$, shown in Fig.~\ref{graphc6}. It is easy to see that when traversing the graph, the accumulated index difference
    \[
        \delta(t) = \operatorname{Ind}\gamma_1([0,t]) - \operatorname{Ind}\gamma_2([0,t])
    \]
    increases by $1$ when we leave vertex $(1,2)$, $(2,3)$ or $(3,1)$, and decreases by $1$ otherwise.
    Then on any path, the signs of the $+1$, $-1$ summands will alternate. If we start with any of the pairs $(1,2)$, $(2,3)$, $(3,1)$, then the first summand will be negative, otherwise it will be positive. 
\end{proof}





\section{Chromatic number of the bicycle}
\label{sect_bike}
We will call a bicycle a union of two circles. Let us prove the statement of the Theorem~\ref{thm_bicycle} on the chromatic number of the bicycle, which plays a key role in the whole proof. Namely,
    \[
        \chi_{[1-\varepsilon,1+\varepsilon]}(T_1(u) \cup T_1(v)) \geq 4, \quad 1<\|u-v\|<2.
    \]
    \label{bicycle}

\begin{figure}[ht!]
    \centering
    \includegraphics[width=9cm]{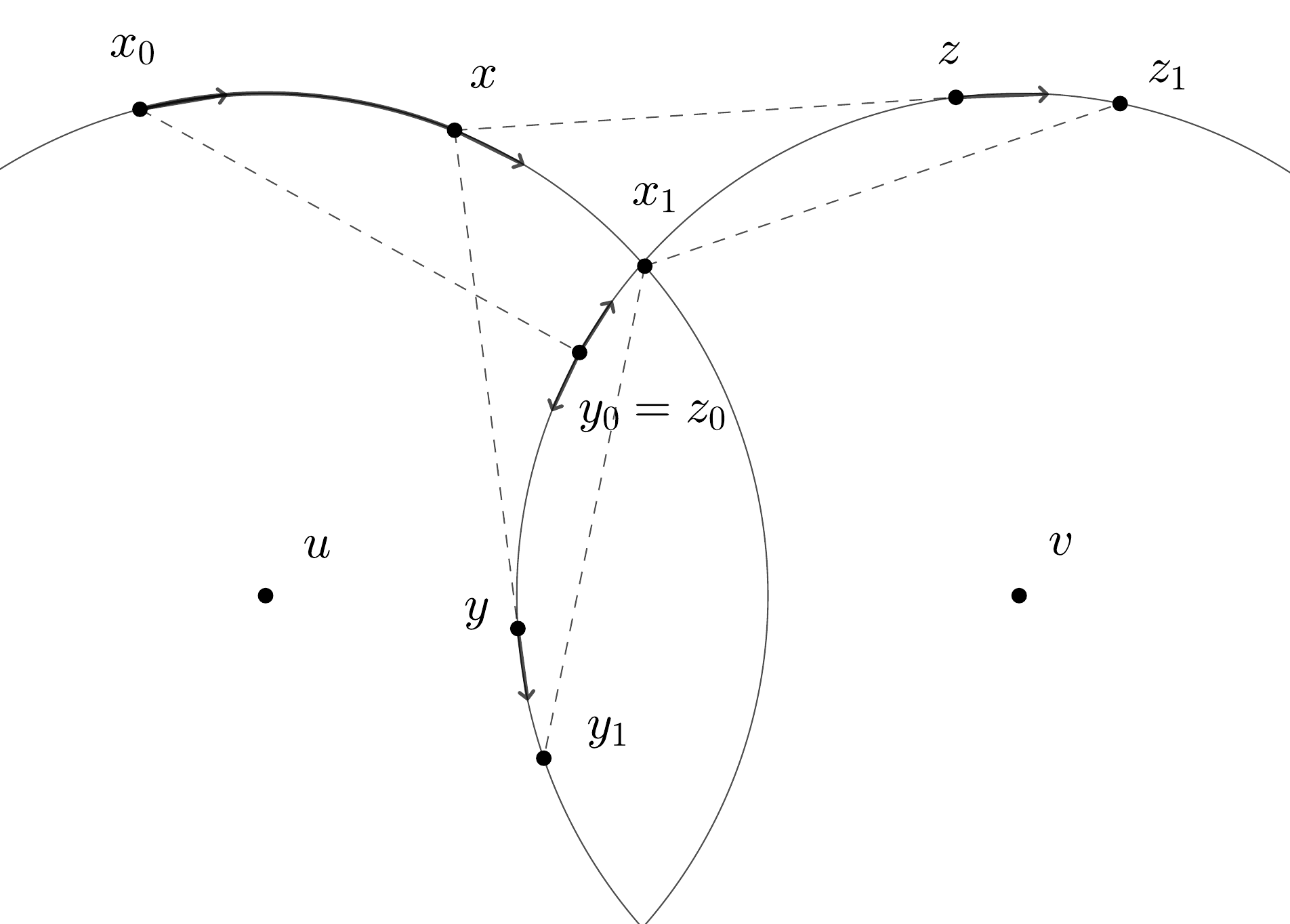}
    \caption{Complementary arcs on the bicycle}
    \label{bicycle_arcs}
\end{figure}

\begin{proof}
    We are going to prove that 3 colors are not enough. Suppose the contrary, let the bicycle be properly colored in 3 colors. Let $x \in T_1(u)$, $\|x-v\|<2$ and find two different points $y,z \in T_1(v)$, for which  
    \[\|x-y\|=\|x-z\|=1.\] 
    
    Observe that under these assumptions we have $|T_1(x) \cap T_1(v)|=2$ by Property \ref{property1}, and the intersection depends continuously on $x$ by Property \ref{property2}. Consider the motion $x=x(t)$  along the circle $T_1(u)$ from the position $x(0)=x_0$, at which $y(0)=y_0 = z(0)=z_0$, to the point $x(1)=x_1 \in T_1(u) \cap T_1(v)$ (Fig.~\ref{bicycle_arcs}). Let
    \[
      \|x(t)-y(t)\| = \|x(t)-z(t)\| = 1, \quad \forall t \in [0,1].
    \]
   
    Then two pairs of arcs $(x_0 x_1, y_0 y_1)$, $(x_0 x_1, z_0 z_1)$ are complementary by construction. The pair $(y_1 x_1, x_1 z_1)$ is complementary, since the first arc transforms into the second arc by clockwise 1-rotation (Definition \ref{def_rotation}).
    
    Further, assume that the points $x_1, y_1, z_1, y_0=z_0$ are not bichromatic.  It follows from Proposition~\ref{prop_ind} that the indices of the arcs $y_0 y_1$, $z_0 z_1$  are either the same or differ by one, because they are both complementary to the arc $x_0 x_1$ and have a common origin $y_0=z_0$. Hence,
    \begin{equation}
    \operatorname{Ind} y_1 z_1 = \operatorname{Ind} y_1 y_0 +\operatorname{Ind} z_0 z_1 = - \operatorname{Ind} y_0 y_1 +\operatorname{Ind} z_0 z_1 \in \{-1,0,1\}. 
    \label{est1}
    \end{equation}
    On the other hand, $\varphi(y_1)\neq \varphi(x_1)$, $\varphi(x_1)\neq \varphi(z_1)$, and the arcs $(y_1 x_1, x_1 z_1)$ are complementary, hence
    \[
        |\operatorname{Ind} y_1 x_1| \geq 1, \quad | \operatorname{Ind} x_1 z_1| \geq 1, \quad | \operatorname{Ind} y_1 x_1- \operatorname{Ind} x_1 z_1| \leq 1. 
    \]

Non-zero integers $\operatorname{Ind} y_1 x_1$ and $\operatorname{Ind} x_1 z_1$ can differ by 1 only if they have the same sign. Now observe that
    \begin{equation}
        | \operatorname{Ind} y_1 z_1 | = | \operatorname{Ind} y_1 x_1 +\operatorname{Ind} x_1 z_1 | \geq 2.
\label{est2}
    \end{equation}
      We have a contradiction between \eqref{est1} and \eqref{est2}.

It remains to note that if there are bichromatic points among $x_1, y_1, z_1, y_0=z_0$, then it is sufficient to apply a $\alpha$-rotation, $0<\alpha<\varepsilon$ (see Definition \ref{def_rotation}) to the points considered on the circle $T_1(v)$. Here we move the points considered, but do not change the coloring of the circles. Since the set of bichromatic points on the circle is finite, $\alpha$ can be chosen in such a way that that the images $x'_1, y'_1, z'_1, y'_0=z'_0$ are not bichromatic. The complementarity conditions of the arcs in this case are preserved regardless of a small displacement, because
\[
    \|y'(t)-y(t)\|=\|z'(t)-z(t)\|=\alpha<\varepsilon, \; \forall t\in [0,1],
\]
and by the triangle inequality
\[
    1-\varepsilon < \|x(t)-y'(t)\|< 1+\varepsilon, \quad 1-\varepsilon < \|x(t)-z'(t)\|< 1+\varepsilon.\qedhere
\]
\end{proof}


\begin{remark}
    Computer experiments similar to those performed in \cite{warsaw,parts2023} show that in the  case of Euclidean distance $\chi_{[1-\varepsilon,1+\varepsilon]}(B(s))=4$ at $0.8\leq s \leq 2.65$ and $\varepsilon=0.003$. This can also be true when $\varepsilon \to 0$. The exact values of the upper and lower bounds of the parameter $s$ at which the chromatic number is 4 are not mentioned in this paper.
\end{remark}

\section{Proof of the main result}

\begin{lemma}
If a proper discrete coloring of the plane with 6 colors exists, then it contains such a pair of trichromatic points $u,v$  that \[C(u)=C(v), \quad 1<\|u-v\|<2.\]
\label{final}
\end{lemma}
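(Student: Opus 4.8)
The plan is to argue by contradiction: assuming the proper discrete $6$-coloring exists but contains no two trichromatic points of equal multicolor at distance in $(1,2)$, I would derive a contradiction. The starting object is a single trichromatic point $u$, which exists by the chromaticity lemma; after relabeling, $C(u)=\{4,5,6\}$. First I would record the structural constraint that this forces on the circle $T_1(u)$. Every point of $T_1(u)$ lies at distance $1$ from $u$, and in every neighborhood of $u$ all three colors $4,5,6$ occur; a point of $T_1(u)$ carrying one of these colors would create a monochromatic pair at distance $\approx 1$, violating properness. Hence $T_1(u)$ is properly $3$-colored in $\{1,2,3\}$. Moreover, by Proposition~\ref{prop3col1} no point of $T_1(u)$ can be plane-trichromatic with the complementary multicolor $\{1,2,3\}$, since such a point would be at distance $1<2+2\varepsilon$ from $u$; so the circle points are genuinely bichromatic with multicolor among $\{1,2\}$, $\{2,3\}$, $\{1,3\}$. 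A consequence I would exploit is that the three monochromatic regions $R_4,R_5,R_6$ meeting at $u$ are trapped in the open disk of radius $1$ about $u$, because their boundaries cannot cross the color-$\{1,2,3\}$ circle.

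Next I would extract structured witnesses. By Proposition~\ref{circ_recolor} the coloring of $T_1(u)$ may be taken cyclic, and Propositions~\ref{multicol} and~\ref{prop_connect} then supply, for each of the three boundary multicolors, a triple of points on $T_1(u)$ at pairwise distance strictly greater than $1$ that are not joined by interior $\{i,j\}$-polylines. Since these points also lie at distance exactly $1$ from the common center $u$, their pairwise distances are automatically at most the diameter of the norm-circle, so they already sit in the range $(1,2]$ that the Lemma targets. Each such point lies on a map edge separating two of the colors $1,2,3$; tracing these edges inward to the boundary of the central blob $R_4\cup R_5\cup R_6$ and outward to the surrounding $\{1,2,3\}$-regions produces plane-trichromatic vertices whose multicolors I would track. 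The bookkeeping here is the index calculus of Section~4: treating the relevant boundary arcs as complementary arcs and applying Proposition~\ref{prop_ind} controls how the admissible pair of colors evolves along them, which constrains the multicolor types of the vertices that must appear and rules out the degenerate possibility that all of them are distinct.

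Finally I would invoke pigeonhole over the finitely many multicolor types. Proposition~\ref{prop3col1} forbids disjoint multicolors at distance below $2+2\varepsilon$, and the witnesses produced inherit the distance bound $>1$ from Proposition~\ref{multicol} together with the containment giving distance $<2$; the index parity from Proposition~\ref{prop_ind} then forces two of the tracked vertices to carry \emph{the same} multicolor while lying at mutual distance strictly between $1$ and $2$, which is exactly the pair $u,v$ claimed. The main obstacle I anticipate is this last step: ensuring that the repeated multicolor is realized at a separation that is simultaneously $>1$ and $<2$, rather than collapsing below $1$ or escaping beyond $2$. This is where the strict convexity of $F$, the diameter bound of the norm-circle, and the careful selection of the three pairwise-far points from Proposition~\ref{multicol} must be combined with the index arithmetic. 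Handling the degenerate positions, where a tracked point happens to be bichromatic, is a routine technicality dispatched by the same small-rotation argument used in the proof of Theorem~\ref{bicycle0}, which perturbs the configuration by less than $\varepsilon$ and preserves all complementarity relations.
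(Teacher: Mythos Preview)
Your setup is right: start from a trichromatic point, observe that $T_1(u)$ is $3$-colored, and use Propositions~\ref{multicol} and~\ref{prop_connect} to extract, for each of the three circle-multicolors, three bichromatic points at pairwise distance $>1$ whose interior boundary curves are not pairwise joined. Tracing those nine boundaries into the disk is also the paper's move. But from that point on you have misidentified the engine of the argument.

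The index calculus of Section~4 (Proposition~\ref{prop_ind}) plays no role here. That proposition concerns \emph{complementary arcs}, i.e.\ pairs of arcs whose matched points stay at distance $\approx 1$; a single boundary curve between two color regions is not such an object, and there is no ``index parity'' that forces two of the traced endpoints to share a multicolor. What actually does the work is a direct count combined with Proposition~\ref{prop3col1}. In the paper's labeling ($C(x)=\{1,2,3\}$, circle colored in $4,5,6$): each of the nine traced $\{4,5\}$-, $\{4,6\}$-, $\{5,6\}$-boundaries must terminate inside the disk at a trichromatic point. That endpoint cannot have multicolor $\{4,5,6\}$, because such a point would be at distance $<2$ from $x$ with \emph{disjoint} multicolor, contradicting Proposition~\ref{prop3col1}. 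Hence each endpoint has multicolor $\{4,5,p\}$, $\{4,6,p\}$ or $\{5,6,p\}$ with $p\in\{1,2,3\}$ --- exactly nine possible types. If two endpoints share a type, the non-connectedness from Proposition~\ref{prop_connect} guarantees they lie on boundaries of distinct same-colored regions, which forces their distance $>1$, while containment in the unit disk gives distance $<2$; this is the desired pair. If instead all nine types are realized once each, you have together with $x$ ten trichromatic points of pairwise distinct multicolors inside the disk. Now follow one of the boundaries emanating from $x$ (between colors in $\{1,2,3\}$): by $1$-connectedness of the regions it must hit a region of some color $q\in\{4,5,6\}$, producing a trichromatic point of multicolor $\{1,2,q\}$ (say). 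Its complement $\{3\}\cup(\{4,5,6\}\setminus\{q\})$ is one of the nine types already present in the disk, at distance $<2$ --- contradicting Proposition~\ref{prop3col1}.

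So the gap is that your ``bookkeeping'' step is the wrong tool, and you are missing both halves of the dichotomy: the pigeonhole over exactly nine multicolor types, and the $10{+}1$st point that delivers the contradiction when all nine are distinct. The small-rotation trick from Theorem~\ref{bicycle0} is likewise irrelevant here; there are no complementarity relations to preserve.
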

\begin{figure}
    \centering
    \includegraphics[width=6cm]{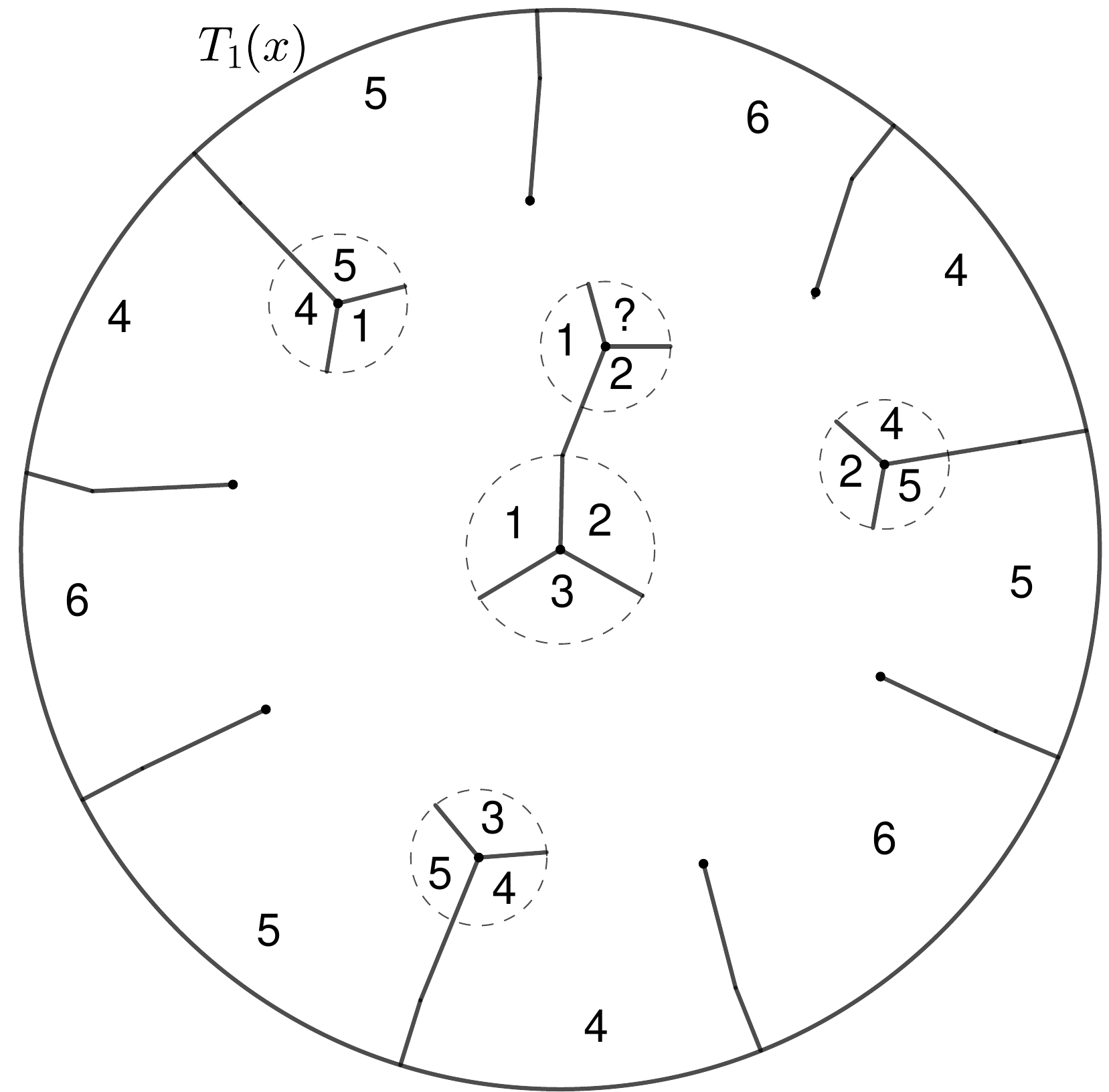}
    \caption{Search for 10+1 trichromatic points inside a circle}
    \label{fig_10pts}
\end{figure}
\begin{proof}
Consider a trichromatic point $x$. Let $C(x)=\{1,2,3\}$.  The circle $T_1(x)$ is colored in 4, 5, 6. According to Proposition~\ref{multicol}, the circle $T_1(x)$ contains at least 9 bichromatic points belonging to the boundaries between regions. According to Proposition~\ref{prop_connect}, for each of the multicolors $\{4,5\}$, $\{4,6\}$, $\{5,6\}$ there are three points with pairwise distances greater than 1, which are not connected by a boundary between regions with a point of the same multicolor on $T_1(x)$. So regions of color 4 adjacent to different points are at least $1+\varepsilon$ apart, and the same statement is true for colors 5 and 6.

    The neighborhood of $T_1 (x)$ contains three boundaries between areas of colors $4,5$ that start at bichromatic points constructed according to Proposition~\ref{multicol}. Let us extend them inside the unit disk centered at $x$. According to Proposition~\ref{prop3col1}, these boundaries cannot end at a point that has a multicolor $\{4,5,6\}$.  Consequently, among these three trichromatic points either some two are of the same multicolor, or their multicolors are  $\{4,5,1\}$, $\{4,5,2\}$, $\{4,5,3\}$. In the latter case, the distance between two trichromatic points $u, v$ with the same multicolor cannot be less than 1, because the bichromatic boundaries are continuous. On the other hand,  $\|u-v\| <2 $  because $u,v$ are inside $T_1(x)$. 

    By reasoning similarly, we get that if no pair of points with the same multicolor is found, then a disk of unit radius centered at $x$ contains at least 10 trichromatic points of pairwise different multicolors
    \[
     \{1,2,3\}, \{4,5,p\}, \{4,6,p\}, \{5,6,p\}, \quad p \in \{1,2,3\}.
    \]

    In addition, all regions are 1-connected, and at least one of the boundaries between regions of colors $1,2$, or $1,3$, or $2,3$  should end at the border of one of the colors $4,5,6$. (Fig.~\ref{fig_10pts}). Indeed, consider a maximal region $R$ colored in $1, 2, 3$ and containing $u$. Along the boundary of $R$ there cannot be tiles of only one of the colors $1, 2, 3$, because the monochromatic regions are 1-connected. Hence, the boundary between a pair of colors $1, 2, 3$ faces the boundary of the region at the trichromatic point $p$. The third color at $p$ must be one of $4,5,6$. But having an additional trichromatic point with a multicolor $\{1,2,q\}, \; q \in \{4,5,6\}$ in the circle contradicts Proposition~\ref{prop3col1}.
\end{proof}

Let us proceed to the proof of Theorem~\ref{thm_main}.

\begin{proof}
By Lemma~\ref{final} in any proper discrete coloring of the plane in 6 colors with parameter $h>0$, for arbitrarily small $h$ there will be trichromatic points $u, v$ with the same multicolor, with $1<\|u-v\|<2$. Consider a pair of circles $T_1(u)$, $T_1(v)$. Their union is a bicycle. It must be colored in three colors, which is impossible according to the Lemma~\ref{bicycle}. Then it follows from Proposition~\ref{prop_discrete} that an arbitrary proper coloring in 6 colors with forbidden interval $[1-\varepsilon,1+\varepsilon]$ also does not exist. Finally, it remains to note that the transition from a strictly convex norm to an arbitrary norm can be easily realized by Lemma~\ref{lemma_strictly_convex}.  
\end{proof}

Finally, we prove Theorem~\ref{thm_cover}.

\begin{proof}
    Denote by $U_r(x)$ closed unit disc of radius $r$ centered at $x$. 
    Let us recall the statements used in the proof of Theorem \ref{thm_main} and determine the radius of the disc $U_r(0)$, in which all the constructions lie. First, the trichromatic point $x$ can be found at a distance at most 1 from the origin by Lemma \ref{lemma_trichromatic_pt}. Second, the circle $T_1(x)$ lies in $U_2(0)$. Finally, a second circle $T(y)$ with the center in the interior of $T_1(x)$ lies in $U_3(0)$. Hence, all reasoning is valid for the disc of radius 3, which was mentioned in the formulation of Theorem~\ref{thm_cover}.

Consider the covering of $U_3(0)$ by six closed sets $A_1, \dots, A_6$. For some $\varepsilon>0$ consider a tesselation of the plane by hexagons of diameter $\varepsilon/3$. For a hexagon $H$ one can choose a color $c(H)=i$ such that $H$ has a non-empty intersection with $A_i$. By the formulation of Theorem \ref{thm_cover}, such a coloring is possible for all hexagons having a common point with the disc $U_3(0)$.
According to Theorem \ref{thm_main} and the estimate on the diameter of the circle above, there exists such a pair of points $x(\varepsilon), y(\varepsilon) \in U_3(0)$ that violate the conditions of proper coloring, i.e. $\|x(\varepsilon)-y(\varepsilon)\| \in [1-\varepsilon, 1+\varepsilon]$, and $c(x(\varepsilon))=c(y(\varepsilon))$.

Consider an infinite sequence of positive real numbers $\{\varepsilon_k\}_{k=1}^{\infty}$, $\lim_{k \to \infty} \varepsilon_k=0$.  For each $\varepsilon_k$ define an (ordered) pair of points $(x_k,y_k)$, $x_k = x(\varepsilon_k)$, $y_k = y(\varepsilon_k)$. Each pair can be considered as a point of the set
\[
    U^2 = U_3(0) \times U_3(0) \subset \mathbb{R}^4.
\]
Define the \emph{product metric} on $\mathbb{R}^4$, constructed via the $\max$-norm applied to a given 2-norm on subspaces (\cite{deza2009encyclopedia}, p.~91):
\[
    \rho\left((u_1,v_1), (u_2,v_2)\right) = \max\{\|u_1-u_2\|_U, \|v_1-v_2\|_U\}.
\]

Since $U^2$ is closed and bounded, there exists a limit point, i.e. an ordered pair $(x^*, y^*) \in U^2$. Further,  $1-\varepsilon_k<\|x^*-y^*\|<1+\varepsilon_k$, $\varepsilon_k \to 0$, and we have an equality $\|x^*-y^*\|=1$.

Observe that from the sequence of pairs $(x_k,y_k)$ converging to $(x^*,y^*)$, we can select a monochromatic subsequence (e.g. of colour $i$) having the same limit. But this means that the distance from each of the points $x^*$, $y^*$ to the closed set $A_i$ is less than $\varepsilon_j$, $j \in J$ for some infinite subset of indices $J \subset \{1,2,3, \dots\}$, and $x^* \in A_i$, $y^* \in A_i$, $\|x^*-y^*\|=1$, as required. 
\end{proof}

\section{Conclusion}


It seems that the arguments used in this paper can be applied to a wider class of problems. For example, Property \ref{property3} is not necessary. The different condition, namely, that $6$ arcs of unit diameter do not cover the unit circle, is sufficient. This is true for negative curvature of the surface and false for positive curvature. Nevertheless, let us leave the formulations of maximum generality outside the scope of this paper.

In the private communication, D{\"o}m{\"o}t{\"o}r P{\'a}lv{\"o}lgyi pointed out that the same formulation for the sphere  $S^2(r)$ of a sufficiently large radius $r$ can lead to a different answer. As was shown in work by  P{\'e}ter Ágoston~\cite{agoston2020lower}, at least $8$ colors are required if two the same colors are at a distance more than $1$ apart. Such coloring, following paper by C. Thomassen~\cite{thomassen1999nelson}, is called \emph{nice}. It is unclear what happens if we drop this condition. In the case of a sphere, is it true that the lower estimate for nice colorings equals the estimate for colorings with a forbidden interval?

In the formulation of the questions, we assume the metric to be Euclidean, although it makes sense to consider the general case as well.

\begin{question}
    Is it true that there is some value $r_0$ such that
    \[
    \chi_{[1-\varepsilon,1+\varepsilon]}(S^2(r)) \geq 8,
    \]
    when $r>r_0$?
\end{question}

Let us formulate an open question on a more complicated relaxation of the Hadwiger--Nelson problem, considered earlier in \cite{slice2,slice3}.

\begin{question}
Is it true that 
\[
    \chi_{\{1\}}(\mathbb{R}^2 \times [0,\varepsilon]^k) = 7.
\]
for small enough $\varepsilon>0$ and some $k$ independent of $\varepsilon$?
\end{question}

Note that in \cite{slice2,slice3} it was shown that
\[
    \chi_{\{1\}}(\mathbb{R}^2 \times [0,\varepsilon]^2) \geq 6, \quad   \chi_{\{1\}}(\mathbb{R}^3 \times [0,\varepsilon]^6) \geq 10.
\]

Speaking of three-dimensional Euclidean space, it is known that the coloring of the filling by distorted  truncated octahedrons is proper even if the interval is forbidden, i.e.
\[
     10 \leq \chi_{[1-\varepsilon,1+\varepsilon]}(\mathbb{R}^3) \leq 15,
\]
if $\varepsilon$ is sufficiently small \cite{Coulson,radoivcic2003note}. Apparently, in higher dimensions it is much more difficult to approach the exact estimate, but the following question seems natural.

\begin{question}
Is it true that
    $\chi_{[1-\varepsilon,1+\varepsilon]}(\mathbb{R}^3) = 15$
    for sufficiently small $\varepsilon$?
\end{question}

In the recent paper by N. Alon,  M. Buci{\'c} and  L. Sauermann  it was shown that for a ``typical'' Minkowski norm a finite graph of unit distances in $\mathbb{R}^2$ cannot have a chromatic number greater than four~\cite{alon2025unit}. Thus, all norms for which the chromatic number of the plane is greater than 4, are in some sense special, namely, in the sense of the first Baire category. For example, for any 5-chromatic unit distance graph in the Euclidean metric there exists an infinite family of norms in which it is also a unit distance graph, but this family is still a meagre set. Separate cases are 5-chromatic graphs constructed for a polygonal ``circle''~\cite{exoo2021chromatic}. There are also several upper estimates of the form $\chi(\mathbb{R}^2; F) \leq 6$ for norms of this type~\cite{geher2023note}.  The existence of specific norms in which the chromatic number of the plane would be exactly 5, 6 or 7 remains an open question. Unfortunately, it is far from always possible to construct a norm in which a 6-chromatic or 7-chromatic Euclidean $\varepsilon$-distance graph would be a ``honest'' unit distance graph. Apparently, this is not true for the constructive version of the proof in this paper.

\textbf{Acknowledgements}. 
The author is grateful to the anonymous reviewer whose comments allowed to correct numerous inaccuracies and fill gaps in the proofs. The author would like to thank Danila Cherkashin for helpful discussions and comments on the text, Jaan Parts for the experimental evidence that forced the writing of this paper and for pointing out some inaccuracies, D{\"o}m{\"o}t{\"o}r P{\'a}lv{\"o}lgyi for the formulation of a related question. Certainly, the programmers who created dynamic geometry software, in particular GeoGebra, should also be thanked for the idea of the proof in Section~\ref{sect_bike}.

\bibliographystyle{plain}
\bibliography{main}

\end{document}